\definecolor{e-mail}{rgb}{0,.40,.80}
\definecolor{reference}{rgb}{.20,.60,.22}
\definecolor{citation}{rgb}{0,.40,.80}
\renewenvironment{abstract}{\footnotesize\par\noindent\ignorespaces{\itshape Abstract}\quad}
\newcommand{\keywords}[1]{{\footnotesize\noindent\textit{Keywords:}
  \parbox[t]{120mm}{\raggedright\footnotesize#1}}\vspace{.5pc}}
\newcommand{\classification}[2]{{\footnotesize\noindent 2010 \textit{Mathematics subject
classification:} Primary #1\\[-2pt]\phantom{\footnotesize\noindent 2010
\textit{Mathematics subject classification:}} Secondary #2\vspace{0pc}}}
\theoremstyle{plain}
\newtheorem{theorem}{Theorem}[section]
\newtheorem{lemma}[theorem]{Lemma}
\newtheorem{proposition}[theorem]{Proposition}
\theoremstyle{definition}
\newtheorem{definition}[theorem]{Definition}
\newtheorem{example}[theorem]{Example}
\theoremstyle{remark}
\newtheorem{remark}[theorem]{Remark}
\title{\large\sc UNIPOTENT DIFFERENTIAL ALGEBRAIC GROUPS AS PARAMETERIZED DIFFERENTIAL GALOIS GROUPS\footnote{A.~Ovchinnikov was supported by the NSF grant CCF-0952591; M.~F.~Singer was supported by the NSF grant CCF-1017217}}
\author{\normalsize\sc ANDREY MINCHENKO, ALEXEY OVCHINNIKOV, \\ \normalsize AND MICHAEL F. SINGER\\
\\
\it
\normalsize The Hebrew University of Jerusalem, Einstein Institute of Mathematics\\
\it
\normalsize Jerusalem, 91904, Israel {\rm ({\color{blue}an.minchenko@gmail.com})}\\ \\
\it
\normalsize CUNY Queens College, Department of Mathematics\\
\it
\normalsize65-30 Kissena Blvd, Queens, NY 11367, USA\\
\it
\normalsize CUNY Graduate Center, Department of Mathematics \\
\it
\normalsize 365 Fifth Avenue, New York, NY 10016, USA {\rm({\color{blue} aovchinnikov@qc.cuny.edu})}\\ \\
\it
\normalsize North Carolina State University, Department of Mathematics\\
\it
\normalsize Raleigh, NC 27695-8205, USA {\rm ({\color{blue}singer@ncsu.edu})}
\vspace{-0.1in}
}
\date{}
\def\C{{\mathbb C}}
\def\Q{{\mathbb Q}}
\def\p{{\mathfrak p}}
\DeclareMathOperator{\Const}{\mathcal C}
\DeclareMathOperator{\I}{\mathbb I}
\DeclareMathOperator{\U}{\mathcal U}
\DeclareMathOperator{\Mn}{\bf M}
\DeclareMathOperator{\SL}{SL}
\DeclareMathOperator{\Quot}{Quot}
\DeclareMathOperator{\Span}{span}
\DeclareMathOperator{\trdeg}{tr.deg}
\DeclareMathOperator{\diag}{diag}
\DeclareMathOperator{\Ga}{{\bf G}_a}
\DeclareMathOperator{\Ru}{{\bf R}_u}
\DeclareMathOperator{\GL}{GL}
\DeclareMathOperator{\Rep}{\bf Rep}
\newcommand{\Gm}{\mathbf{G}_{m}}
\DeclareMathOperator{\Ker}{Ker}
\DeclareMathOperator{\ord}{ord}
\DeclareMathOperator{\id}{id}
\newcommand{\K}{\mathbf{K}}
\newcommand{\ZZ}{\mathbb{Z}}
\newcommand{\Le}{\leqslant}
\newcommand{\Ge}{\geqslant}
\numberwithin{equation}{section}
\begin{document}

\maketitle

\begin{abstract}\footnotesize We deal with aspects of the direct and inverse problems in  parameterized Picard--Vessiot (PPV) theory. It is known that, for certain fields, a linear differential algebraic group (LDAG) $G$ is a PPV Galois group over these fields if and only if $G$ contains a Kolchin-dense finitely generated group. We show that, for a class of LDAGs $G$, including unipotent groups, $G$ is such a group if and only if it has differential type $0$. We give a procedure to determine if a parameterized linear differential equation has a PPV Galois group in this class and show how one can calculate the PPV Galois group of a parameterized linear differential equation if its Galois group has differential type $0$.
\end{abstract}
\smallskip
\keywords{differential algebraic groups; parameterized differential Galois theory; algorithms}
\classification{12H05}{12H20; 13N10; 20G05; 20H20; 34M15}

\section{Introduction}
Classical differential Galois theory studies symmetry
groups of solutions of linear differential equations, or,
equivalently, the groups of automorphisms of the corresponding
extensions of differential fields. The groups that arise are linear
algebraic groups over the field of constants. This theory, started in the 19th century by Picard and Vessiot, was put on a firm
modern footing by Kolchin~\cite{Kolchin1948}. A
generalized differential Galois theory  having differential algebraic groups (as in \cite{KolDAG}) as Galois groups  was initiated in~\cite{Landesman}.
 The parameterized Picard--Vessiot Galois theory considered in~\cite{PhyllisMichael} is a special case of the above generalized differential Galois theory and studies symmetry groups of the solutions of linear differential equations whose coefficients contain parameters. This is done by
constructing a differential field containing the solutions and their
derivatives with respect to the parameters, called a parameterized
Picard--Vessiot extension (PPV-extension), and studying its group of
differential symmetries, called a parameterized differential Galois
group (PPV-group). The Galois groups that arise are linear differential algebraic
groups (LDAGs), which are groups of matrices whose entries satisfy polynomial differential
equations in the parameters. 

As in all Galois theories, one can ask for an answer to the  {\em  Inverse Problem (Which groups appear as Galois groups?)} and  the {\em Direct Problem (Given an equation, what is its Galois group?)}. This paper deals with aspects of both of these problems in the context of the parameterized Picard--Vessiot theory. 

Beginning with the Inverse Problem, let $\U$ be a universal differential field \cite[Ch.~III.7]{Kol} with derivations $\Delta = \{\partial_1, \ldots, \partial_m\}$, that is, a $\Delta$-differential extension of $\C$, the complex numbers, such that, if $k\subset K$ are $\Delta$-fields with $k \subset \U$ and $k, K$ both finitely generated over $\Q$ as $\Delta$-fields, then there is a $\Delta$-isomorphism of $K$ into $\U$ fixing elements of $k$. We extend $\U$ to a $\Delta'= \{\partial\} \cup \Delta$-field $\U(x)$, where the derivations of $\Delta$ extend by setting $\partial_i(x) = 0, i = 1, \ldots , m$, and the new derivation $\partial$ is trivial on $\U$ and $\partial(x) = 1$. Combining the results of ~\cite{DreyfusDensity,ClaudineMichael}, we have the following characterization of those LDAGs that occur as parameterized PPV-groups over $\U(x)$:

\smallskip
\noindent
 {\em A linear differential algebraic group $G$ is a PPV-group over $\U(x)$ if and only if it contains a finitely generated Kolchin-dense subgroup (such an LDAG $G$ is called a differentially finitely generated group (DFGG)).} 

\smallskip 
 \noindent
 One can ask for a characterization of such groups in terms of their group-theoretic structure.  For example, in \cite{MichaelGmGa}, it is shown that a linear algebraic group $G$ (thought of as
an LDAG) has a finitely generated Kolchin-dense subgroup if and only if there is no differential homomorphism of its identity component $G^\circ$  onto $\Ga$, the additive group. In this paper, we prove (Theorem~\ref{thm:Main}) that an LDAG $G$ with $G/\Ru(G)$ constant (see Proposition~\ref{prop:Constant} for the meaning of ``constant''), where $\Ru(G)$ is the unipotent radical of $G$, is a DFGG if and only if $G$ has differential type $0$ (Definition~\ref{Def:type}), that is, $G$ is in a certain sense finite-dimensional. In particular, this characterizes what unipotent groups appear as PPV-groups over $\U(x)$. That is, if $G$ is a unipotent DFGG, then  $G$ has differential type $0$. In the extreme case, when $G\subset\Ga$, see Lemma~\ref{lem:DFGGGa}. 
 
 The difficulty that arises when one attempts to deduce our main result from this fact, by induction, is as follows. If $G_1$ is a normal differential algebraic subgroup  of $G$ such that $G/G_1$ embeds into $\Ga$, then, by the above, the differential type of $G/G_1$ is $0$. Hence, if we knew that $G_1$ had differential type $0$, we would be able to conclude that $G$ had differential type $0$, as desired. However, it is not clear why $G_1$ must be a DFGG, which is one of the subtleties.

Turning to the direct problem, the first known algorithms that compute PPV-groups are given in \cite{Carlos,Dreyfus}. These apply to  first and second orders equations. In this paper, we present two algorithms concerning the direct problem.  In Algorithm 1 (\S\ref{alg1}), we give a procedure that finds the defining equations of the PPV-group of a linear differential equation $\partial_xY= AY,  A\in \Mn_n(\U(x))$ assuming this group has differential type $0$ (Definition~\ref{Def:type}). In Algorithm 2 (\S\ref{alg2}), we give a procedure that determines if the PPV-group $G$ of a linear differential equation $\partial_xY= AY,  A\in \Mn_n(\U(x))$ has the property that $G/\Ru(G)$ is constant. Combining these algorithms allows us to determine if $G/\Ru(G)$ is constant and, if so, find the defining equations of $G$. On the other hand, if $G/\Ru(G)$ is not necessarily constant, an algorithm that computes $G/\Ru(G)$ is given in~\cite{MiOvSiRed}, together with an algorithm that decides whether $G/\Ru(G)=G$.

The paper is organized as follows. In \S\ref{sec:LDAG}, we begin by reviewing some basic facts concerning  differential algebra, differential dimension, linear differential algebraic groups and their representations, and unipotent  differential algebraic groups. We  then show  the result Theorem~\ref{thm:Main} described above. In~\S\ref{sec:PPV}, we review the essential features of the parameterized Picard--Vessiot theory, present the two algorithms described above and give some examples. 

\section{Linear Differential Algebraic Groups}\label{sec:LDAG}
\subsection{Differential Algebra}\label{subsec:DA}
 We recall some definitions and facts from differential algebra. General references for this section are \cite{Kol} and \cite{Kap}. A $\Delta=\{\partial_1,\ldots,\partial_m\}$-ring $R$ is a commutative associative ring with unit $1$ and commuting derivations $\partial_i: R\to R$. 
Let
$$
\Theta = \big\{\partial_1^{i_1}\cdot\ldots\cdot\partial_m^{i_m}\:|\: i_j \Ge 0\big\}\quad\text{and}\quad \ord\big(\partial_1^{i_1}\cdot\ldots\cdot\partial_m^{i_m}\big) := i_1+\ldots+i_m.$$
Since $\partial_i$ acts on $R$,
there is a natural action of $\Theta$ on $R$.
Let $R$ be a $\Delta$-ring. If $B \supset R$, then $B$ is a $\Delta$-$R$-algebra
if the action of $\Delta$ on $B$ extends the
action of $\partial$ on $R$.
Let $Y = \{y_1,\ldots,y_n\}$ be a set of variables and
$$
\Theta Y := \left\{\theta y_j
\:\big|\: \theta\in\Theta,\ 1\Le j\Le n\right\}.
$$
The ring of differential polynomials $R\{Y\}$ in
differential indeterminates $Y$
over $R$ is
$R[\Theta Y]$
 with
the derivations $\partial_i$ that 
extend the $\partial_i$-action on $R$ as follows:
$$
\partial_i\left(\theta y_j\right) := (\partial\cdot\theta)y_j,\quad 1 \Le j \Le n.$$
 An ideal $I$ in a $\Delta$-ring $R$ is called a differential ideal if
$
\partial_i(a) \in I$ for all  $a \in I$, $1\Le i\Le m$. 
For $F \subset R$,  $[F]$ denotes the differential ideal of $R$ generated by $F$.

 Let $\U$ be a  differentially closed $\Delta$-field, that is a $\Delta$-field  such that any system of polynomial differential equations with coefficients in $\U^n$ having a solution in some $\Delta$-extension of $\U$ already have a solution in $\U^n$ (see \cite[Definition~3.2]{PhyllisMichael}, \cite[Definition~4]{TrushinSplitting}, and the references given there; we do not assume that $\U$ is universal)
and let  $\Const\subset\U$ be its subfield of constants, that is, $\Const = \bigcap\ker\partial_i$.

\begin{definition} A {\it Kolchin-closed} subset $W$ of $\U^n$ defined over $k$ is the set of common zeroes
of a system of differential algebraic equations with coefficients in $k,$ that is, for $f_1,\ldots,f_r \in k\{Y\}$, we define
$$
W = \left\{ a \in \U^n\:|\: f_1(a)=\ldots=f_r(a) = 0\right\}.$$
\end{definition}
\noindent A differential version of the usual Nullstellensatz implies that there is  a bijective correspondence between Kolchin-closed subsets
$W$ of $\U^n$ defined over $k$ and radical differential ideals
$\I(W) \subset k\{y_1,\ldots,y_n\}$. In particular, $\I(W)$ is the smallest radical differential ideal containing
 the differential polynomials $f_1,\ldots,f_r$ that define $W$. The Ritt--Raudenbush Basis Theorem states that every radical
differential ideal is the radical of a finitely
generated differential ideal and so $k\{Y\}$ satisfies the ascending chain condition on radical differential ideals.
Given a Kolchin-closed subset $W$ of
$\U^n$ defined over $k$, we let the coordinate ring $k\{W\}$ be defined as

$$
k\{W\} = k\{y_1,\ldots,y_n\}\big/\I(W).
$$
A differential polynomial map $\varphi : W_1\to W_2$ between Kolchin-closed subsets of $\U^{n_1}$ and $\U^{n_2}$, respectively, defined over $k$, is given in coordinates by differential polynomials in
$k\{W_1\}$. Moreover, to give $\varphi : W_1 \to W_2$
is equivalent to defining a differential $k$-homomorphism $\varphi^* : k\{W_2\} \to k\{W_1\}$. If $k\{W\}$ is an integral domain, then $W$ is called {\it irreducible}. This is equivalent to $\I(W)$ being a prime differential ideal. More generally, if $$\I(W) = \p_1\cap\ldots\cap \p_q$$ is a minimal prime decomposition, which is unique up to permutation, \cite[VII.29]{Kap}, then the irreducible Kolchin-closed sets 
$W_1,\ldots, W_q$ corresponding to $\p_1,\ldots,\p_q$ are called the {\it irreducible components} of $W$. We then have $$W = W_1\cup\ldots\cup W_q.$$

We will need to measure the ``size'' of Kolchin-closed sets. In algebraic geometry, the Hilbert function allows us to  define the dimension and degree of certain algebraic sets using the Hilbert  function, and  a similar object, called the Kolchin polynomial, is used to measure properties of Kolchin-closed set.  Given a differential field extension $k\subset L$ and $a=(a_1,\ldots, a_n )\in L^n$, let $M:=k( a_1,\ldots,a_n)$
denote the  subfield of $L$ generated by $a_1,\ldots,a_n$ over $k$ (in the algebraic, not differential sense). Let 
$$M_s =  k(\theta (a_i)\:|\: \ord(\theta)\Le s, 1\Le i\Le n).$$ One can show (\cite[\S II.12]{Kol}) that there exist  a polynomial $\omega_{a/k}(t) \in \Q(t)$  and an integer $N$ such that 
$$\omega_{a/k}(s) = \trdeg_KM_s,\quad \text{ for all } s>N.$$
\begin{definition} \label{Def:type}\hspace{0cm}
\begin{enumerate}
\item The polynomial $\omega_{a/k}(s)$ is called the {\it Kolchin polynomial of $a$ over $k$}.  
\item The degree of $\omega_{a/k}(t)$ is called the {\it differential type of a over $k$} and is denoted by $\tau(a/k)$ (if $\omega_{a/k}(t) = 0$, we let $\tau(a/k) = -\infty$). 
\item  If $W$ is an irreducible Kolchin-closed set over $k$, $\omega(W)$ and $\tau(W)$ are defined to be $\omega_{a/k}$ and $\tau(a/k)$, respectively, where $a$ is the image of $(y_1, \ldots , y_n)$ in $$k\{W\} = k \{y_1,\ldots,y_n\}\big/\I(W).$$ If $V$ is an arbitrary Kolchin-closed set over $k$, we define
\begin{align*}
\tau(V) &= \max \{\tau(W) \ | \ W \text{ an irreducible component of } V\},\\
\omega(V) &= \max \{\omega(W) \ | \ W \text{ an irreducible component of } V\}.
\end{align*}\end{enumerate}
\end{definition}
We refer to \cite[\S II.12]{Kol}  and \cite[\S IV.4]{KolDAG} for the the various properties of the differential type. The following are the two properties of differential type that are most important for this paper. For an irreducible Kolchin-closed set $V$ over $k$, we denote  the quotient field of $k\{V\}$ by $k\langle V\rangle$.
\begin{enumerate}
\item \label{tau1}If $V$ is an irreducible  Kolchin-closed set over $k$, then $\tau(V) = 0$ if and only if 
$\trdeg_k k\langle V\rangle < \infty$.
\item \label{tau2} If $H \subset G$ are linear differential algebraic groups (see Section~\ref{Sec:LDAG}), then $\tau(G) = \max \{\tau(G/H), \tau(H)\}$.
\end{enumerate} 
Property~\ref{tau1} follows immediately from the definition of type and Property~\ref{tau2}  is proven in \cite[\S IV.4]{KolDAG}.

\subsection{Linear Differential Algebraic Groups: Structure and Representations}\label{Sec:LDAG}

In this section, we review the general facts concerning linear differential algebraic groups that we need in the succeeding sections.

\begin{definition}\cite[Ch.~II, \S1, p.~905]{Cassidy}\label{def:LDAG} A {\it linear differential
algebraic group} (LDAG) defined over $k$ 
is a Kolchin-closed subgroup $G$  of $\GL_n(\U)$ over $k$,
that is, an intersection
of a Kolchin-closed subset of $\U^{n^2}$ over $k$ with $\GL_n(\U)$ that is closed under
the group operations.
\end{definition}

Although several of the results mentioned in this section hold for LDAGs defined over arbitrary $k$, we shall now assume, without further mention,  that $k = \U$, that is, the LDAGs we deal with are defined over a differentially closed field.

Note that we identify $\GL_n(\U)$ with the Kolchin-closed
subset of $\U^{n^2+1}$ given by
$$\left\{(A,a)\:\big|\: (\det(A))\cdot a-1=0\right\}.$$
If $X$ is an invertible $n\times n$ matrix, we
can identify it with the pair $(X,1/\det(X))$. Hence, we may represent the coordinate ring of $\GL_n(\U)$ as
$
\U\{X,1/\det(X)\}$.
Denote $\GL_1$ simply by $\Gm$. Its coordinate ring
is $\U\{y,1/y\}$.
The group $(\U, +)$ may be considered an LDAG via its usual two-dimensional representation. We shall refer to this LDAG as $\Ga$.  Its coordinate ring is $\U\{y\}$.
 
For a group $G\subset\GL_n(\U)$, we denote  the Zariski and Kolchin-closures of $G$ in $\GL_n(\U)$ by $\overline{G}^Z$ and $\overline{G}$, respectively. Note that $\overline{G}^Z$  and $\overline{G}$ are a linear algebraic group and LDAG over $\U$, respectively. 

The irreducible component of an LDAG $G$ containing the element $\id$ is called the {\it identity component} of $G$ and denoted by $G^\circ$. An LDAG $G$ is called {\it connected} if $G = G^\circ$, which is equivalent to $G$ being an irreducible Kolchin-closed set \cite[p.~906]{Cassidy}.

\subsubsection{Representations of linear differential algebraic groups.}\label{sec:rep}
\begin{definition}\cite{CassidyRep},\,\cite[Definition~6]{OvchRecoverGroup} Let $G$ be an LDAG. A differential polynomial
group homomorphism  $$\rho : G \to \GL(V)$$ is called a
{\it differential representation} of $G$, where $V$ is a
finite-dimensional vector space over $k$. Such a space is
simply called a {\it $G$-module}. 
{\it Morphisms} between $G$-modules are $k$-linear maps that are $G$-equivariant. The category of differential representations of $G$ is denoted by $\Rep G$.
\end{definition}

By \cite[Proposition~7]{Cassidy}, $\rho(G)\subset\GL(V)$ is a differential algebraic subgroup.  
Given a representation $\rho$ of an LDAG $G$, for each $i$, $1\Le i\Le m$, one can define its prolongations 
$$P_i(\rho) : G \to \GL(P_i(V))
$$ with respect to $\partial_i$ as follows (see \cite[\S5.2]{GGO}, \cite[Definition~4 and Theorem~1]{OvchRecoverGroup}, and \cite[p.~1199]{diffreductive}). Let 
\begin{equation}\label{eq:prolongation}
P_i(V):=\leftidx{_{\U}}{\left((\U\oplus \U\partial_i)_{\U}\otimes_{\U} V\right)}
\end{equation}
as vector spaces, where $\U\oplus \U\partial_i$ is considered as the right $\U$-module:
$$
\partial_i\cdot a = \partial_i(a) + a\partial_i
$$
for all $a \in \U$.
Then the action of $G$ is given by $P_i(\rho)$ as follows:
$$
P_i(\rho)(g) (1\otimes v) := 1\otimes \rho(g)(v),\quad P_i(\rho)(g)(\partial_i\otimes v) := \partial_i\otimes\rho(g)(v)
$$
for all $g \in G$ and $v \in V$. In the language of matrices, if $A_g \in \GL_n$ corresponds to the action of $g \in G$  on $V$, then the matrix
$$
\begin{pmatrix}
A_g&\partial_i\big(A_g\big)\\
0&A_g
\end{pmatrix}
$$
corresponds to the action of $g$ on $P_i(V)$. In what follows, the $q^{\rm th}$ iterate of $P_i$ is denoted
by $P_i^q$. For any integer $s$ and LDAG $G \subset \GL_n(\U)$, we will refer to $$P_m^sP_{m-1}^s\ldots P_1^s: G \rightarrow GL_{N_s}(\U)$$  to be the {\em $s^{th}$ total prolongation of $G$} (where $N_s$ is the dimension of the underlying prolonged vector space). We denote this representation by $P^s:G \rightarrow GL_{N_s}(\U)$.

\subsubsection{Unipotent radical of  differential algebraic groups.}~Analogous to linear algebraic groups, one can define the notion of a unipotent LDAG and the unipotent radical of an LDAG.

\begin{definition}\label{def:unipotent}\cite[Theorem~2]{Cassunipot} Let $G \subset \GL_n(\U)$ be a linear differential algebraic group defined over $k$. We say that $G$ is {\it unipotent} if one of 
the following equivalent conditions holds:
\begin{enumerate}
\item $G$ is conjugate to a differential algebraic subgroup  of the group $\mathbf{U}_n$ of unipotent upper triangular matrices.
\item $G$ contains no elements of finite order greater than $1$.
\item $G$ has a descending normal sequence  of differential algebraic subgroups 
$$G=G_0 \supset G_1 \supset \ldots \supset G_N =\{1\}.$$
with $G_i/G_{i+1}$ isomorphic to a differential algebraic subgroup of the additive group $\bold{G}_a$.
\end{enumerate}
\end{definition}

\begin{example} Typical examples of unipotent LDAGs include $\Ga$, its powers, and their differential algebraic subgroups \cite[Proposition~11]{Cassidy} as well as the $\mathbf{U}_n$ and their differential algebraic subgroups.
\end{example}

The image of a unipotent LDAG under a differential polynomial homomorphism is again unipotent \cite[Proposition~35]{Cassunipot} and, therefore,  a unipotent LDAG $G$ is connected (since the image of $G$ in $G/G^\circ$ is a finite unipotent group and, therefore, trivial). One can also show that 
a linear differential algebraic group  $G$  admits a unique maximal normal unipotent differential algebraic subgroup $\Ru(G)$  \cite[Theorem~3.10]{diffreductive}.

\begin{definition}
Let $G$ be an LDAG. Then
\begin{enumerate} \item $\Ru(G)$ is  called the {\it unipotent radical} of $G$; 
\item $G$ is called {\it reductive} if $\Ru(G) = \{\id\}$. \end{enumerate}
\end{definition}

\begin{example}
Typical examples of reductive LDAGs include $\GL_n$ and its Zariski dense differential algebraic subgroups \cite{CassidyClassification}, differential algebraic subgroups of $\Gm$, the LDAG $\SL_n$, and other LDAGs whose defining polynomial equations define semi-simple linear algebraic groups. 
\end{example}

\begin{remark}\label{rem:GbarG} Note that
\begin{enumerate}
\item Reductivity of an LDAG does not depend on its faithful representation.
\item If $G$ is given as a linear differential algebraic subgroup of some $\GL_n$, we may consider its Zariski closure $H$
in $\GL_n$, which is a linear algebraic group defined over $\U$. We will denote the unipotent radical of $H$ (in the sense of linear algebraic groups) by $\Ru(H)$ as well. Following the proof of \cite[Theorem~3.10]{diffreductive}, one then has $$\Ru(G) = \Ru{\left(H\right)} \cap G.$$ 
This implies that, if $H$ is reductive as a linear algebraic group, then $G$ is reductive.
However, the Zariski closure of $\Ru(G) $ may be strictly 
included in $\Ru(H)$ \cite[Example~3.17]{diffreductive}. 
\end{enumerate} 
\end{remark}

\subsubsection{Constant Linear Differential Algebraic Groups.} Recall that an additive category is called {\it semisimple} if, for every object $V$ and subobject
$W \subset V$, there exists a subobject $U\subset V$ such that $V = W \oplus U$.  In charactersitic zero, the
category of finite-dimensional $G$-modules  of a reductive algebraic group $G$ is semisimple \cite[Ch.~2]{Springer}.

\begin{definition}We will call an LDAG $G$ over $\U$ \emph{constant} if the category $\Rep G$ is semisimple.\end{definition}
\begin{proposition}\cite[Theorem~3.14]{diffreductive},\cite[Theorem~4.6]{GO}\label{prop:Constant}
Let $G$ be a reductive LDAG. Let $\varrho: G\to\GL_n(\U)$ be a faithful differential  representation. The following statements are equivalent:
\begin{enumerate}
\item[(1)] $G$ is constant;
\item[(2)] $\varrho(G)$ is conjugate to a subgroup of $\GL_n(\Const)$ by a matrix from $\GL_n(\U)$;
\item[(3)] for the $G$-module $V=\U^n$, where $G$ acts via $\varrho$, $P_i(V)\simeq V\oplus V$ for all $i$, $1\Le i \Le m$.
\end{enumerate}
\end{proposition}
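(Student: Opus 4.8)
The plan is to prove the equivalence of the three statements in Proposition~\ref{prop:Constant} by the cyclic chain $(2)\Rightarrow(3)\Rightarrow(1)\Rightarrow(2)$, since each link is either a short computation or a reduction to a cited structural fact about $\Rep G$. The implication $(2)\Rightarrow(3)$ is the routine one: if $\varrho(G)$ is conjugate into $\GL_n(\Const)$, then, after conjugating, all matrix entries $A_g$ of the $G$-action on $V=\U^n$ lie in $\Const$, so $\partial_i(A_g)=0$; by the explicit block-matrix description of the prolongation $P_i(V)$ given in~\eqref{eq:prolongation} and the surrounding discussion, the action of $g$ on $P_i(V)$ is then block-diagonal with two copies of $A_g$, which is exactly the statement $P_i(V)\simeq V\oplus V$ of $G$-modules.

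For $(3)\Rightarrow(1)$ I would argue that the splitting $P_i(V)\simeq V\oplus V$, valid for the chosen faithful $V$, forces semisimplicity of all of $\Rep G$. The idea is that the category $\Rep G$ is generated (under the operations of taking direct sums, subquotients, duals, and tensor products) by $V$; the prolongation functors $P_i$ are exact and compatible with these tensor operations, so from $P_i(V)\simeq V\oplus V$ one propagates $P_i(W)\simeq W\oplus W$ to every $W\in\Rep G$. Once $P_i(W)\simeq W\oplus W$ for all $W$ and all $i$, one shows no nonsplit extension $0\to W_1\to W\to W_2\to 0$ can exist: applying $P_i$ and using that $\mathrm{Ext}^1$ of the associated Lie algebra / Hopf-algebra cohomology is detected by the prolongation (the "derivative of a cocycle is a cocycle"), the class of the extension must be killed, so $W=W_1\oplus W_2$. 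This is really the content of \cite[Theorem~3.14]{diffreductive} and \cite[Theorem~4.6]{GO}, so I would cite the machinery there rather than reprove it; the honest work is checking the compatibility of $P_i$ with $\otimes$ and with subquotients, which is standard from \cite[\S5.2]{GGO} and \cite[Definition~4 and Theorem~1]{OvchRecoverGroup}.

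Finally $(1)\Rightarrow(2)$: assuming $\Rep G$ is semisimple, I want to produce a single conjugating matrix over $\U$ putting $\varrho(G)$ into $\GL_n(\Const)$. The strategy is to use the structure theory of reductive LDAGs with semisimple representation category. Concretely, semisimplicity of $\Rep G$ means every $G$-module is a sum of irreducibles; one then invokes the classification (via the Tannakian/fiber-functor description, or directly the results on differentially reductive groups in \cite{diffreductive,GO}) to see that $G$ is, up to conjugacy over $\U$, defined by polynomial (not differential) equations, i.e. is the $\Const$-points of an algebraic group base-changed appropriately — which is precisely (2). The key step here is recognizing that the absence of nonsplit self-extensions of $V$, equivalently $P_i(V)\simeq V\oplus V$, is equivalent to the $1$-cocycle giving $\partial_i$ acting on the coordinate ring being a coboundary, and that trivializing all these cocycles simultaneously is achieved by one gauge transformation $g\mapsto BgB^{-1}$ with $B\in\GL_n(\U)$; this is a Picard--Vessiot-type solvability statement over the differentially closed field $\U$.

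The step I expect to be the main obstacle is $(1)\Rightarrow(2)$, specifically the passage from the abstract semisimplicity of $\Rep G$ to the existence of a \emph{single} matrix $B\in\GL_n(\U)$ that conjugates $\varrho(G)$ into the constants. Semisimplicity is an intrinsic categorical property, whereas (2) is a concrete normal-form statement about the given faithful embedding, and bridging them requires the full Tannakian reconstruction of $G$ from $\Rep G$ together with the fact that $\U$ is differentially closed (so that the relevant torsors are trivial). Establishing $(3)\Rightarrow(1)$ is comparatively mechanical but still needs the careful verification that $P_i$ commutes with tensor products and preserves subobjects; I would lean on \cite{GGO,OvchRecoverGroup} for that and on \cite{diffreductive,GO} for the hard direction, so that the proof in the paper is essentially a citation-assisted assembly of $(2)\Rightarrow(3)\Rightarrow(1)\Rightarrow(2)$.
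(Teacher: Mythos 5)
The paper never proves this proposition: it is imported wholesale from \cite[Theorem~3.14]{diffreductive} and \cite[Theorem~4.6]{GO}, so deferring the substantive directions to those references is exactly what the authors do, and your $(2)\Rightarrow(3)$ computation is correct. The problem is with the mechanism you sketch for $(3)\Rightarrow(1)$. That argument --- propagate $P_i(W)\simeq W\oplus W$ from the generator $V$ to every $W\in\Rep G$ and conclude semisimplicity because ``the prolongation detects $\mathrm{Ext}^1$'' --- never uses reductivity, and without reductivity the implication is false. Take $G=\Ga(\Const)\subset\GL_2(\U)$, the unipotent upper-triangular $2\times 2$ matrices whose off-diagonal entry is a constant, with $V=\U^2$ the tautological faithful module. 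All matrix entries are constants, so $\partial_i(A_g)=0$ and $P_i(V)\simeq V\oplus V$ for every $i$ (and likewise for the modules built from $V$), yet $\Rep G$ is not semisimple: $V$ itself is a nonsplit extension of the trivial module by itself. Splitting of the prolongation extension only trivializes the single class coming from the logarithmic-derivative cocycle $g\mapsto\partial_i(A_g)A_g^{-1}$; it says nothing about purely algebraic extension classes, and it is reductivity that kills those. So $(3)\Rightarrow(1)$ (equivalently $(2)\Rightarrow(1)$) genuinely needs reductivity and cannot be obtained by the categorical propagation you describe; this is where the cited theorems do their real work. (Also, $P_i$ is not a tensor functor --- $P_i(V\otimes W)$ has dimension $2\dim V\dim W$, not $4\dim V\dim W$ --- so even the propagation step needs more than ``compatibility with $\otimes$''.)

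You also locate the main difficulty in the wrong place. With the paper's definition of constant, $(1)\Rightarrow(3)$ is immediate: in a semisimple category the extension $0\to V\to P_i(V)\to V\to 0$ splits. And $(3)\Rightarrow(2)$ is a concrete gauge computation rather than full Tannakian reconstruction: a $G$-equivariant section of $P_i(V)\to V$ amounts to a matrix $X_i$ with $\partial_i(A_g)=X_iA_g-A_gX_i$ for all $g\in G$, and an invertible solution $B$ of $\partial_i(B)=-BX_i$, which exists because $\U$ is differentially closed, conjugates $\varrho(G)$ into $\partial_i$-constant matrices; the only delicate point for $m>1$ is choosing the $X_i$ compatibly so that a single $B$ works for all derivations at once. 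So the economical cycle is $(1)\Rightarrow(3)\Rightarrow(2)$ together with the cited implication back into $(1)$ for reductive $G$, and it is that last implication, not $(1)\Rightarrow(2)$, that carries the weight of the citations.
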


Simply put, a reductive LDAG $G$ is constant if  it is differentially isomorphic to a group of constant matrices. In this case, images of all representations of $G$ are conjugate to groups of constant matrices.

\subsection{Differentially finitely generated groups}\label{sec:DFGG}
As mentioned in the introduction, this paper is motivated by a desire to further understand  LDAGs that have finitely generated Kolchin-dense subgroups. 

\begin{definition}\label{def:DFGG}
An LDAG $G$ defined over $\U$ is said to be \emph{differentially finitely generated}, or simply a \emph{DFGG}, if $G(\U)$ has a Kolchin-dense finitely generated subgroup. \end{definition}

A general question is to characterize such groups in terms of their algebraic structure. For example, in \cite{MichaelGmGa}, it is shown that a linear algebraic group $G$ (thought of as an LDAG) is a DFGG if and only if there is no differential homomorphism of its identity component $G^\circ$ onto $\Ga$. In this section, we shall characterize another class of differentially finitely generated groups. In particular, we will show
\begin{theorem}\label{thm:Main}Let  $G$ be an LDAG  with $G/\Ru(G)$ constant. The group $G$ is a DFGG if and only if $\tau(G) \Le 0$.
\end{theorem}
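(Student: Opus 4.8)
The plan is to prove the two implications separately, with the ``only if'' direction being essentially immediate and the ``if'' direction carrying all the work. For the ``only if'' direction: if $G$ is a DFGG, I would invoke the known structural result that a DFGG cannot admit a differential surjection onto $\Ga^{(\infty)}$-type objects; more precisely, I would use Lemma~\ref{lem:DFGGGa} (the extreme case $G\subset\Ga$) together with the characterization already recorded for linear algebraic groups in \cite{MichaelGmGa}. The key point is that if $\tau(G)\ge 1$, then some differential section of $G$ would produce a subquotient of infinite transcendence degree that is ``too big'' to be topologically finitely generated — concretely, one extracts a differential quotient isomorphic to a differential algebraic subgroup of $\Ga$ of differential type $1$ (e.g.\ the full $\Ga$ or a large proper subgroup defined by a higher-order operator), and such a group is not a DFGG. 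So the contrapositive gives $\tau(G)\le 0$.

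For the ``if'' direction — assume $\tau(G)\le 0$ and show $G$ is a DFGG — I would argue in two stages using the exact sequence $1\to\Ru(G)\to G\to G/\Ru(G)\to 1$. Since $\tau(G)\le 0$ and, by Property~(\ref{tau2}) of differential type, $\tau(G)=\max\{\tau(\Ru(G)),\tau(G/\Ru(G))\}$, both $\Ru(G)$ and $G/\Ru(G)$ have differential type $\le 0$, i.e.\ finite transcendence degree by Property~(\ref{tau1}). First I would handle $G/\Ru(G)$: it is a \emph{constant} reductive LDAG by hypothesis, so by Proposition~\ref{prop:Constant} it is differentially isomorphic to a group of constant matrices, hence essentially a linear algebraic group over $\Const$; such a group is a DFGG because a Zariski-dense finitely generated subgroup exists (it has no differential surjection onto $\Ga$ since all its representations are constant) and Kolchin density over the constants coincides with Zariski density. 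Second, I would treat $\Ru(G)$: a unipotent LDAG of differential type $0$ has a composition series with each factor a differential algebraic subgroup of $\Ga$ of finite transcendence degree; each such factor is a DFGG by Lemma~\ref{lem:DFGGGa}, and an extension of a DFGG by a DFGG (in the unipotent setting, where the relevant cohomology is controlled) is again a DFGG.

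The main obstacle — and the subtlety the authors themselves flag in the introduction — is gluing these two pieces together: knowing that $\Ru(G)$ and $G/\Ru(G)$ are each DFGGs does not formally give that $G$ is a DFGG, because lifting a dense finitely generated subgroup of the quotient and combining it with a dense finitely generated subgroup of $\Ru(G)$ requires that the extension not be ``too twisted.'' I would address this by choosing finitely many elements $g_1,\dots,g_r\in G$ whose images generate a Kolchin-dense subgroup of $G/\Ru(G)$, together with finitely many generators $u_1,\dots,u_s$ of a dense subgroup of $\Ru(G)$, and then showing that the group $\Gamma$ they generate is Kolchin-dense in $G$: the closure $\overline{\Gamma}$ surjects onto $G/\Ru(G)$ and contains a dense subgroup of $\Ru(G)$, so $\overline{\Gamma}\cap\Ru(G)=\Ru(G)$, forcing $\overline{\Gamma}=G$. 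The delicate step is guaranteeing $\overline{\Gamma}\cap\Ru(G)=\Ru(G)$ rather than something smaller — here I would use the action of the $g_i$ by conjugation on $\Ru(G)$, which since $G/\Ru(G)$ is constant acts ``algebraically'' in a controlled way, so that the $\Gamma$-orbit of the $u_j$ still generates a dense subgroup; an induction on the length of a $G$-stable filtration of $\Ru(G)$ by $\Ga$-subquotients, at each stage invoking that $H^1$ of a finitely generated group with coefficients in a finite-transcendence-degree $\Ga$-module is suitably small, should close the argument. This is precisely where the hypothesis ``$G/\Ru(G)$ constant'' is used in an essential (not merely cosmetic) way.
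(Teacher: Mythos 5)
Your proposal inverts where the real difficulty of this theorem lies, and the direction you call ``essentially immediate'' is in fact the one that fails as you sketch it. For the implication (DFGG $\Rightarrow\tau(G)\Le 0$) you propose to extract from a DFGG $G$ with $\tau(G)\Ge 1$ a differential quotient isomorphic to a differential algebraic subgroup of $\Ga$ of type $1$ and then contradict Lemma~\ref{lem:DFGGGa}. No such extraction is justified, and since your argument for this direction nowhere uses the hypothesis that $G/\Ru(G)$ is constant, it would prove that \emph{every} DFGG has type $\Le 0$ --- which is false: the example given right after Proposition~\ref{prop:type0} is a DFGG $G\subset\GL_2$ with $\tau(\Ru(G))=1$ (there $G/\Ru(G)$ is not constant), and consequently that $G$ admits no quotient of the kind you want. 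The obstruction is exactly the one flagged in the introduction: in a dévissage with $\Ga$-type quotients one cannot conclude that the intermediate normal subgroups of a DFGG are themselves DFGGs. The paper's actual proof of this direction (Proposition~\ref{prop:type0}) circumvents this by placing $G$ in $P=R\ltimes U$, splitting the finitely many generators as $s=r_su_s$, obtaining $\tilde G=G_r\ltimes G_u$ via Proposition~\ref{prop:SDirect}, and then --- this is precisely where constancy of $G_r\simeq G/\Ru(G)$ enters --- conjugating $G_r$ into $\GL_n(\Const)$ so that all conjugates $ru_sr^{-1}$ have entries in the $\Const$-span of a fixed finite set $X$, whence $\Gamma_u\subset U_n(X)$ and $\tau(G_u)\Le 0$ by Lemmas~\ref{prop:ConstSpan} and~\ref{lem:UnType0}. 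Your sketch supplies no substitute for this mechanism, so the core of the theorem is missing.

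On the other direction ($\tau(G)\Le 0\Rightarrow$ DFGG), the paper proves this for arbitrary LDAGs, with no constancy hypothesis, by a Tretkoff-style induction on $\omega(G)$ (Proposition~\ref{prop:sufficiency}); your route through $1\to\Ru(G)\to G\to G/\Ru(G)\to 1$ could be repaired but has two defects as written. First, you cite Lemma~\ref{lem:DFGGGa} backwards: it says a DFGG subgroup of $\Ga$ has type $0$, not that a type-$0$ subgroup of $\Ga$ is a DFGG (the latter is true, but needs an argument such as the one in Proposition~\ref{prop:sufficiency} or the $\Const$-vector-space structure of such subgroups). Second, the gluing step you describe as delicate and propose to settle with a cohomological bound is actually immediate and needs neither constancy nor any $H^1$: if $N$ is a Kolchin-closed normal subgroup with $N$ and $G/N$ both DFGGs, take generators of a dense subgroup of $N$ together with lifts of generators of a dense subgroup of $G/N$; the Kolchin closure $H$ of the group they generate contains $N$ (it contains the closure of a dense subgroup of $N$), and its image in $G/N$ is a Kolchin-closed subgroup containing a dense subgroup, hence all of $G/N$, so $H=G$. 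That the constancy hypothesis is not needed here is itself a signal that its essential role must be in the converse direction, which is where your proposal has the genuine gap.
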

We begin by showing that the condition  $\tau(G) \Le 0$ implies that $G$ is a DFGG.
\begin{proposition}\label{prop:sufficiency}  Let $G$ be an LDAG defined over $\U$. If $\tau(G) \Le 0$, then $G$ is a DFGG.
\end{proposition}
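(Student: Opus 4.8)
The plan is to show that an LDAG $G$ with $\tau(G) \le 0$ has a finitely generated Kolchin-dense subgroup. By Property~\ref{tau1} of the differential type, $\tau(G) \le 0$ means precisely that $\trdeg_{\U}\U\langle G\rangle < \infty$ on each irreducible component; combined with Property~\ref{tau2} applied to $G^\circ \subset G$ (and the fact that $G/G^\circ$ is finite), we get $\trdeg_{\U}\U\langle G^\circ\rangle < \infty$ as well. So $G$ is, in a precise sense, ``finite-dimensional'': its coordinate ring, while not finitely generated as a $\Delta$-algebra in general, has finite transcendence degree over $\U$. The first step is to exploit this to reduce to a situation where finite generation of a Zariski-dense subgroup is already known, namely the purely algebraic group case handled in \cite{MichaelGmGa}.

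The key idea I would use is a total-prolongation argument. Recall from \S\ref{sec:rep} that for any integer $s$ we have the $s$th total prolongation $P^s: G \to \GL_{N_s}(\U)$, and the image $G_s := P^s(G)$ is again an LDAG. The point is that because $\trdeg_{\U}\U\langle G\rangle$ is finite, the transcendence degrees of the coordinate rings of the tower $G_0 \leftarrow G_1 \leftarrow \cdots$ stabilize; once $s$ is large enough, passing to a further prolongation adds no new ``differential data,'' so that the defining ideal of $G$ is, in an appropriate sense, generated by the equations coming from $G_s$ together with the prolongation relations. More concretely, for $s$ large the Kolchin closure of $G$ is determined by the Zariski closure of $G_s$ inside $\GL_{N_s}$: a finitely generated subgroup of $G$ that is Zariski-dense in $G_s$ for this sufficiently large $s$ will automatically be Kolchin-dense in $G$, because the prolongation structure forces any Kolchin-closed subgroup containing it and projecting onto a Zariski-dense subgroup of $G_s$ to be all of $G$. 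This is where finiteness of the transcendence degree is essential: it is what makes the tower stabilize and hence makes ``Zariski-dense after enough prolongations'' equivalent to ``Kolchin-dense.''

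Having made that reduction, I would then invoke, or re-prove in this setting, the fact that a linear algebraic group over $\U$ always contains a finitely generated Zariski-dense subgroup. In fact this is elementary: a linear algebraic group has a finitely generated Zariski-dense subgroup over any infinite field (take a maximal torus, which is topologically generated by one element together with finite groups, times generators of the component group and of a generating set of root subgroups; more simply, one can cite that $\U$, being differentially closed and hence algebraically closed of infinite transcendence degree over $\Q$, is ``large'' enough). Applying this to $\overline{G_s}^Z$ and pulling the generators back into $G$ (choosing actual preimages in $G$, which exist since $P^s$ is surjective onto $G_s$), we obtain finitely many elements $g_1,\dots,g_r \in G$ whose prolongations generate a Zariski-dense subgroup of $G_s$. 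By the stabilization discussed above, $\langle g_1,\dots,g_r\rangle$ is then Kolchin-dense in $G$, so $G$ is a DFGG.

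The main obstacle is making precise and proving the stabilization claim: that finiteness of $\trdeg_{\U}\U\langle G\rangle$ implies that, for $s \gg 0$, Kolchin density in $G$ is detected by Zariski density in the $s$th total prolongation $G_s$. This requires care with the structure of the prolongation maps — one must check that $G$ embeds as a Kolchin-closed subgroup of (a prolongation of) the Zariski closure of $G_s$ compatibly with the derivations, and that no further differential relations are needed once the transcendence degree has stabilized; a clean way to see it is that the natural surjection from the $\Delta$-coordinate ring generated by $G_s$'s functions and their derivatives onto $\U\{G\}$ becomes an isomorphism for $s$ large because both sides then have the same (finite) transcendence degree and $\U\{G\}$ is a domain on each component. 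One should also handle the component group of $G$ with a little care, though since $G/G^\circ$ is finite it only contributes finitely many extra generators and causes no real difficulty.
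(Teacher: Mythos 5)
There is a genuine gap, and it sits exactly at the step that carries all the difficulty. Your plan is to find a finitely generated Zariski-dense subgroup of $\overline{G_s}^Z$ (citing the purely algebraic fact that a linear algebraic group over a large field has one) and then to ``pull the generators back into $G$, which exist since $P^s$ is surjective onto $G_s$.'' But $P^s$ is surjective onto $G_s=P^s(G)$, not onto its Zariski closure $\overline{G_s}^Z$, and for a type-$0$ LDAG these almost never coincide: e.g.\ for $G=\Gm(\Const)$ one has $P^s(G)=\{aI\,:\,a\in\Const^*\}$ for every $s$, whose Zariski closure is all of $\Gm$ diagonally embedded, so the generators produced by the algebraic-group fact are generally not prolongations of elements of $G$ and have no preimages to choose. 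What your argument actually requires is finitely many elements \emph{of $G$ itself} whose images are Zariski-dense in $\overline{G_s}^Z$, i.e.\ a finitely generated Zariski-dense subgroup of the proper, merely Kolchin-closed, subgroup $G_s\cong G$. This cannot be obtained from density of $G_s$ alone: an arbitrary Zariski-dense subgroup of an algebraic group need not contain a finitely generated Zariski-dense subgroup (the roots of unity in $\Gm$ are Zariski-dense but every finitely generated subgroup is finite). So the reduction begs essentially the statement being proved, and one must use the differential-algebraic structure of $G$ to rule out such torsion behaviour and to produce generators inside $G$.

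That is precisely what the paper's proof does, and why it looks quite different from your sketch: it adapts the Tretkoff--Tretkoff argument directly to $G$, using Schur's theorem to show a connected $G$ with $\omega(G)>0$ cannot be torsion (in the diagonalizable case via Cassidy's result that $\overline{G}^Z\simeq\Gm^q$ forces $\Gm^q(\Const)\subset G$), and then inducting on the integer $\omega(G)$ using a maximal proper connected subgroup $H$, the additivity $\omega(G/H)=\omega(G)-\omega(H)$ in the normal case, and the conjugation trick $L^\circ\supsetneq H$ in the non-normal case. The other half of your plan --- that for $s$ large, Zariski density of $P^s(\Gamma)$ in $\overline{P^s(G)}^Z$ forces Kolchin density of $\Gamma$ in $G$ --- is salvageable (compare Kolchin polynomials of the closure of $\Gamma$ and of $G$, using that containment of irreducible Kolchin-closed sets with equal Kolchin polynomials is equality), but it does not rescue the proof, because the missing input is the existence of suitable generators inside $G$, not the density transfer.
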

\begin{proof} The proof follows, {\it mutatis mutandis}, the proof given in \cite{tretkoff79} that a linear algebraic group contains a finitely generated Zariski-dense subgroup.  For the convenience of the reader, we supply the details. We first prove this result under the additional assumption that $G$ is connected. Note that $\tau(G) \Le 0$ implies that $\omega(G)$ is an integer equal to the transcendence degree of the quotient field of $\U\{G\}$ over $\U$.   When $$\omega(G) = 0,$$ then $G$ is finite and the result is clear.  
Assume now that $$\omega(G) > 0.$$
We will now show that $G$ contains an element of infinite order.  Assume not, that is all elements of $G$ have finite order. A theorem of Schur \cite[Theorem~36.14]{curtisreiner} states that a linear group all of whose elements are of finite order has an abelian normal subgroup of finite index. Therefore, since $G$ is connected, it must be abelian. Furthermore, we can assume that, after a conjugation, all elements are diagonal, that is, $$G \subset \Gm^t$$ for some $t$. Since $G$ is connected and infinite, its Zariski closure $\overline{G}^Z$ is isomorphic to $\Gm^q$ for some $q>0$. \cite[Proposition~31]{Cassidy} implies that $G$ contains $\Gm^q(\Const)$. This latter group clearly contains elements of infinite order, so $G$ contains an element of infinite order.

We now proceed by induction on $\omega(G)$.  First note that, if $V\subset W$ are  
irreducible Kolchin-closed sets, then $$\omega(V) (s) \Le \omega(W)(s)$$ for all sufficiently large $s$, and equality holds if and only if $V=W$ \cite[Proposition~III.5.2]{Kol}. Let $g \in G$ be an element of infinite order. We then have $$1 \Le \omega\big(\overline{\langle g \rangle}\big) \Le \omega(G),$$ where $\overline{\langle g \rangle}$ is the Kolchin-closure of the group generated by $g$.  If $\omega(G) = 1$, then $\overline{\langle g \rangle} = G$. If $\omega(G) > 1$, let $H$ be a maximal proper connected subgroup of $G$, which exists, for instance, because $\tau(G)\Le 0$. We have that $$1 \Le \omega(H) < \omega(G),$$ so $H$ is a DFGG. 

 If $H$ is a normal subgroup of $G$, then $G/H$ may be identified with a connected LDAG and $$\omega(G/H) = \omega(G) - \omega(H) < \omega(G)$$ (\cite[Corollary~3]{KolDAG}; note that, since these groups have differential type $0$, $a_{\Delta, \mu}(\ldots) = \omega(\ldots)$ in Kolchin's result). Using the inductive hypothesis, let $\{h_i\}$ be a finite set generating a Kolchin-dense subgroup of $H$ and $\{g_j\} \subset G$ be a finite set whose images generate a Kolchin-dense subgroup of $G/H$. The set $\{g_j,h_i\}$ generates a Kolchin-dense subgroup of $G$.
 
 If $H$ is not normal in $G$, then there exists $g \in G$ such that $gHg^{-1} \neq H$. Let $$H = \overline{\langle h_1, \ldots , h_t\rangle}\quad \text{and}\quad L =  \overline{\langle h_1, \ldots , h_t, g\rangle}.$$ The identity component $L^\circ$ of $L$ contains both $H$ and $gHg^{-1}$ and so properly contains $H$. Therefore, $L^\circ = G$, and so $L = G$. This completes the proof when $G$ is connected. For a general LDAG $G$, we need only to note that $G$ is generated by $G^\circ$ and representatives from each of the finite number of cosets of $G^\circ$.  \end{proof}
 
 We will show  in Proposition~\ref{prop:type0} that, if $G$ is a DFGG with $G/\Ru(G)$ constant, then $\tau(G) \Le 0$.   The proof depends on the following four lemmas.

\begin{lemma}\label{lem:DFGGGa} If $H\subset \Ga$ is a DFGG, then $\tau(H) \Le 0$.
\end{lemma}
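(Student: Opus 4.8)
The statement concerns a DFGG $H \subset \Ga$, and we want $\tau(H) \le 0$, i.e.\ by Property~\ref{tau1} we want $\trdeg_{\U}\U\langle H\rangle < \infty$. The plan is to translate the problem into linear-differential-algebra language. A Kolchin-closed subgroup of $\Ga = (\U,+)$ is precisely the solution set in $\U$ of a system of linear homogeneous differential equations in $\Delta$; since $\U\{y\}$ satisfies the ascending chain condition on radical differential ideals (Ritt--Raudenbush) and the defining ideal is linear hence prime, $H$ is defined by a single \emph{linear differential ideal}, and by the theory of such ideals (Cassidy, \cite[Proposition~11]{Cassidy}) $H = \{a \in \U : L_1(a) = \cdots = L_r(a) = 0\}$ for finitely many linear differential operators $L_j \in \U[\Delta]$. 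Its differential type is $0$ exactly when this system of linear PDEs has a finite-dimensional $\Const$-solution space, equivalently when the left ideal generated by the $L_j$ in $\U[\Delta]$ is ``zero-dimensional''. So the task reduces to: if $H$ has a Kolchin-dense finitely generated subgroup $\langle a_1,\dots,a_n\rangle$ (which for the abelian torsion-free group $H \subset \Ga$ is just the $\ZZ$-span, whose Kolchin closure is the $\Const$-span's Kolchin closure, i.e.\ the smallest linear-PDE-defined set containing the $a_i$), show the common solution space of the defining operators is finite-dimensional over $\Const$.

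First I would observe that $\overline{\langle a_1,\dots,a_n\rangle} = H$ forces $H$ to be the smallest Kolchin-closed subgroup of $\Ga$ containing $a_1,\dots,a_n$; since Kolchin-closed subgroups of $\Ga$ correspond to (radical = full, since linear) differential ideals, $H$ corresponds to the linear differential ideal $\I$ of all linear operators annihilating all the $a_i$ simultaneously, i.e.\ $\I = \bigcap_i \Ann_{\U[\Delta]}(a_i)$, and $H = \{a : \ell(a) = 0 \ \forall \ell \in \I\}$. Then I would bound $\trdeg_\U \U\langle H\rangle$ by relating it to $\trdeg_\U \U\langle a_1\rangle \cdots$: the key point is that each $\U\langle a_i\rangle$ has finite transcendence degree over $\U$ because $a_i$ satisfies \emph{some} nonzero linear differential equation over $\U$ in each $\partial_j$ — indeed $a_i$ generates a finitely-differentially-generated field extension, and one must check its differential type is $0$. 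Actually the cleanest route: an element $a \in \Ga$ generates a Kolchin-closed subgroup $\overline{\langle a\rangle}$ which, being a proper-or-full subgroup defined by linear equations, has $\tau \le 0$ iff $a$ satisfies a full-rank linear system; here one invokes that a single element lies in a finitely generated (as a field) differential extension only if... — this needs the density hypothesis to kick in.

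The honest heart of the argument, and the step I expect to be the main obstacle, is showing that \emph{finite generation of a dense subgroup of $H \subset \Ga$ forces the defining linear differential system to be zero-dimensional} (finite-dimensional $\Const$-solution space). The natural approach: suppose $\tau(H) \ge 1$, so the $\Const$-vector space $H = H(\U)$ is infinite-dimensional over $\Const$ and in fact, as a differential-algebraic set, $\omega_{a/\U}$ has positive degree — then $H$ contains ``too many'' elements to be the Kolchin closure of finitely many. More precisely, I would use that if $L_1,\dots,L_r$ have a common left multiple giving a system whose solution space is infinite-dimensional, then among the solutions there is an infinite $\Const$-independent sequence $b_1, b_2, \dots$ with $b_{k+1} \notin \overline{\langle b_1,\dots,b_k\rangle}$; whereas any finitely generated subgroup of $H$ lies in a \emph{fixed} such Kolchin closure of bounded ``dimension'', contradicting density. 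Equivalently, I would invoke the dimension-counting in the proof of Proposition~\ref{prop:sufficiency}: if $\tau(H) \ge 1$ then $\omega(H)$ is not a finite number, so one cannot have $\overline{\langle g\rangle} = H$ for a single $g$, nor build up $H$ from finitely many generators by the inductive argument there — in fact the DFGG property together with $\tau(H) \ge 1$ would, by Proposition~\ref{prop:sufficiency}'s converse direction applied inside $\Ga$, fail. The cleanest closing: invoke \cite{MichaelGmGa} or the explicit classification of differential-algebraic subgroups of $\Ga$ in \cite[Section~2]{Cassidy}, where such subgroups are parameterized by left ideals of $\U[\Delta]$, and note finite generation of a dense subgroup is equivalent to that ideal having finite codimension, which is exactly $\tau = 0$; if no citation covers precisely this, I would give the direct codimension/density argument sketched above, whose technical core is: a proper non-zero-dimensional linear differential system over a differentially closed field has solution space containing, for every $k$, a $\Const$-independent $k$-tuple whose span's Kolchin closure strictly grows, so it is not finitely topologically generated.
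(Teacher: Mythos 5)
There is a genuine gap, and you identify it yourself: the step you call ``the honest heart'' --- that Kolchin-density of a finitely generated subgroup forces the linear system defining $H$ to be zero-dimensional --- is never actually proved. Your contradiction sketch is circular: to produce $b_{k+1}\notin\overline{\langle b_1,\ldots,b_k\rangle}$, or to assert that ``any finitely generated subgroup of $H$ lies in a fixed Kolchin closure of bounded dimension,'' you must already know that the Kolchin closure of a finitely generated subgroup of $\Ga$ is small (type $0$), which is precisely the lemma being proved; neither \cite{MichaelGmGa} nor Cassidy's classification of subgroups of $\Ga$ hands you this equivalence ready-made. Note also that no purely ``topological bigness'' argument (infinite-dimensionality, strictly growing chains of closed subgroups, hence not finitely topologically generated) can work as stated, since $\Gm$ is a DFGG of differential type $1$; any correct proof must exploit the additive, $\Const$-linear structure of $\Ga$ specifically. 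A smaller slip: the observation that $\trdeg_\U\U\langle a_i\rangle<\infty$ is vacuous, since the generators $a_i$ already lie in $\U$; the relevant quantity is the transcendence degree of the generic point of $H$, not of the generators.

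The paper closes exactly this gap with a short Wronskian construction, which is the ingredient your proposal lacks. For each $i$, let $C_i=\U^{\partial_i}$ and let $w_1,\ldots,w_t$ be a $C_i$-basis of $\Span_{C_i}\{h_1,\ldots,h_m\}$, where the $h_j$ generate the dense subgroup; set $L_i(Y)=wr(Y,w_1,\ldots,w_t)$, the Wronskian with respect to $\partial_i$. Then $L_i$ is a nonzero linear differential polynomial over $\U$, its zero set is a Kolchin-closed subgroup of $\Ga$ containing every $h_j$, hence containing the group they generate, hence containing its closure $H$. Thus every element of $H$ satisfies a fixed nonzero linear ordinary differential equation in each $\partial_i$ separately, so in $\U\{y\}/\I(H)$ all derivatives $\theta y$ lie in the $\U$-span of finitely many of them; therefore $\trdeg_\U\U\langle H\rangle<\infty$ and $\tau(H)\Le 0$. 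Your reduction of the problem to linear differential ideals and finite-dimensionality is a reasonable framing, but without an argument of this kind (bounding the closure of the generators by an explicit proper linear system) the proof is incomplete.
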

\begin{proof}
Let $h_1, \ldots ,h_m$ be nonzero elements that generate a Kolchin-dense subgroup of $H$.  We shall show that, for each $i$, $1\Le i\Le n$, there is a nonzero linear differential operator $L_i \in \U[\partial_i]$ such that $L_i(h) = 0$ for all $h \in H$.  This implies that the  (usual, not differential) transcendence degree of the differential coordinate ring of $H$ is finite and so $H$ has type $0$.
Let $C_i = \U^{\partial_i}$ be the $\partial_i$-constants of $\U$. Fix  $i$ and let $w_1, \ldots , w_t$ be a $C_i$-basis of $\Span_{C_i}\{h_1, \ldots , h_m\}$. Let 
$$L_i(Y) = \det \left(\begin{array}{cccc} Y&\partial_iY&\ldots &\partial_i^tY\\w_1 &\partial_iw_1&\ldots &\partial_i^tw_1\\ \vdots&\vdots&\vdots&\vdots\\w_t &\partial_iw_t&\ldots &\partial_i^tw_t\end{array}\right),$$
that is, $$L_i(Y) = wr(Y, w_1, \ldots , w_t),$$ where $wr$ is the Wronskian determinant with respect to $\partial_i$. All linear combinations of the $h_j$ satisfy $L_i(Y) = 0$, so $L_i(h) = 0$ for all $h \in H$.
\end{proof}

For a group $G$ and its subgroups $G_1$ and $G_2$, we write
$$
G=G_1G_2,
$$
if every element $g\in G$ can be represented as a product of elements $g_1\in G_1$ and $g_2\in G_2$. Moreover, if $G_2$ is normal and $G_1\cap G_2=\{e\}$, we write
$$
G=G_1\ltimes G_2.
$$

\begin{proposition}\label{prop:SDirect}
Let $\Gamma_i\subset\GL_n$, $i=1,2,3$, be subgroups and
$$
\Gamma_3=\Gamma_1\ltimes\Gamma_2. 
$$
Let $G_i\subset\GL_n$ be the Kolchin-closure of $\Gamma_i$, $i=1,2,3$. If $G_1\cap G_2=\{e\}$, then
$$
G_3=G_1\ltimes G_2.
$$
\end{proposition}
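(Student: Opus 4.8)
The plan is to show that the Kolchin-closure respects the semidirect product structure, given the extra hypothesis $G_1\cap G_2=\{e\}$. First I would observe that $G_2$ is normal in $G_3$: since $\Gamma_2$ is normal in $\Gamma_3$, for any fixed $\gamma_1\in\Gamma_1$ the conjugation map $x\mapsto \gamma_1 x\gamma_1^{-1}$ is a Kolchin-continuous automorphism of $\GL_n$ sending $\Gamma_2$ into itself, hence sending $G_2=\overline{\Gamma_2}$ into itself; then for fixed $g_2\in G_2$ the map $y\mapsto yg_2y^{-1}$ is Kolchin-continuous and sends $\Gamma_1$ into $G_2$, hence sends $G_1$ into $G_2$; combining, $G_1$ normalizes $G_2$. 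Since $\Gamma_3\subset G_1\cdot G_2$ and $G_1\cdot G_2$ is Kolchin-closed (it is the image of $G_1\times G_2$ under the multiplication morphism, but one should be slightly careful here — see below), one gets $G_3\subseteq G_1\cdot G_2$; the reverse inclusion $G_1,G_2\subseteq G_3$ is clear since $\Gamma_1,\Gamma_2\subseteq\Gamma_3$. Thus $G_3=G_1 G_2$, and with $G_1\cap G_2=\{e\}$ and $G_2\trianglelefteq G_3$ this gives $G_3=G_1\ltimes G_2$.

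The one genuine subtlety, which I expect to be the main obstacle, is justifying that $G_1\cdot G_2$ is Kolchin-closed — equivalently, that it already contains $G_3$. The clean way is: the multiplication map $\mu:G_1\times G_2\to\GL_n$ is a morphism of Kolchin-closed sets, so its image $\mu(G_1\times G_2)=G_1 G_2$ is a Kolchin-constructible subgroup of $\GL_n$; a constructible subgroup of an LDAG is automatically Kolchin-closed (this is the differential-algebraic analogue of the classical fact for algebraic groups, and follows from the Ritt--Raudenbush/ascending-chain considerations recalled in \S\ref{subsec:DA} together with the standard argument that the closure of a constructible subgroup equals the group generated by it). Granting that, $G_1 G_2$ is a Kolchin-closed subgroup containing $\Gamma_3$, hence containing $G_3=\overline{\Gamma_3}$.

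Alternatively, and perhaps more elementarily, one can avoid invoking constructibility by a direct density argument: take any $g\in G_3$. Since $G_3=\overline{\Gamma_3}$ and multiplication is continuous, $g$ lies in the closure of $\Gamma_3=\Gamma_1\Gamma_2\subseteq G_1\cdot\Gamma_2$. One then shows $G_1\cdot G_2$ is closed by noting it is the preimage of $\{e\}$ under the composite $\GL_n\to(G_1\cdot G_2)\backslash$--style quotient — but since quotients by non-closed things are awkward, I would instead simply cite the constructible-subgroup fact. So the real content is exactly this lemma about constructible subgroups; everything else (normality of $G_2$, the trivial intersection being inherited, and the two inclusions) is a routine continuity-of-conjugation-and-multiplication argument.

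I should also double-check the trivial-intersection clause is used only where stated: the inclusion $G_1\cap G_2=\{e\}$ is assumed in the hypothesis (it need not follow automatically from $\Gamma_1\cap\Gamma_2=\{e\}$, which is why the authors impose it), so once we know $G_3=G_1G_2$ with $G_2$ normal, the semidirect-product conclusion is immediate from the definition of $\ltimes$ recalled just before the proposition. Hence the proof reduces to: (i) $G_2\trianglelefteq G_3$ via continuity of conjugation; (ii) $G_1 G_2$ is Kolchin-closed; (iii) assemble.
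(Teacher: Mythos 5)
Your proof is essentially correct, but it follows a genuinely different route from the paper's. Both arguments start the same way: the paper invokes the Waterhouse-style continuity-of-conjugation argument you sketch to see that $G_2$ is normal (in the paper, conjugating by a dense subgroup of $G_3$ rather than just by $\Gamma_1$, which gives normality of $G_2$ in $G_3$ at once; as written, your version only gives that $G_1$ normalizes $G_2$, and normality in $G_3$ comes out only after you know $G_3=G_1G_2$, so the steps should be ordered accordingly). The divergence is in how surjectivity of the product is obtained: instead of proving that the set $G_1G_2$ is Kolchin-closed, the paper passes to the quotient map $\pi\colon G_3\to Q:=G_3/G_2$, notes that $\pi(\Gamma_3)=\pi(\Gamma_1)$ is dense in $Q$ because $\Gamma_3$ is dense in $G_3$ and $\pi$ kills $\Gamma_2$, and uses that $\pi(G_1)$ is a Kolchin-closed subgroup of $Q$ (homomorphic images of LDAGs are differential algebraic subgroups) to conclude $\pi(G_1)=Q$, i.e. $G_3=G_1G_2$; the hypothesis $G_1\cap G_2=\{e\}$ then finishes the proof. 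The paper's route thus buys the closedness statement from standard citations (existence of quotient LDAGs and closedness of homomorphic images), while your route hinges on the lemma that $G_1G_2$ is Kolchin-closed. That lemma is true, but your justification of it is the one weak spot: Ritt--Raudenbush by itself does not give constructibility of images of differential polynomial maps; over the differentially closed field $\U$ you need a Chevalley-type statement coming from Seidenberg elimination (equivalently, quantifier elimination for differentially closed fields), or you should directly cite the known fact that the product of two differential algebraic subgroups, one normalizing the other, is again a differential algebraic subgroup. Once that input is in place, your ``constructible subgroup of a group with Noetherian topology is open in its closure, hence closed'' argument does go through, and the rest of your proof (the two inclusions and the use of the trivial-intersection hypothesis) matches the paper's conclusion.
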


\begin{proof}
By the argument as in \cite[Theorem~4.3(b)]{Waterhouse}, $G_2$ is normal in $G_3$. Consider the quotient map
$$
\pi: G_3\to Q:=G_3/G_2.
$$
Since $\Gamma_3$ is dense in $G_3$, $\pi(\Gamma_3)=\pi(\Gamma_1)$ is dense in $Q$. Therefore, $\pi(G_1)=Q$. Hence, $G_3=G_1G_2$. The hypothesis $G_1\cap G_2=\{e\}$ finishes the proof.
\end{proof}

For a subset $X\subset\U$, let $A(X)$ denote the differential algebraic subgroup of $\Ga(\U)$ generated by $X$. 
\begin{lemma}\label{prop:ConstSpan}
Let $X\subset\U$. Then $A(X)$ contains the $\Const$-span of $X$.
\end{lemma}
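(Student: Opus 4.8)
The plan is to show that $A(X)$, the differential algebraic subgroup of $\Ga(\U)$ generated by $X$, is closed under multiplication by elements of $\Const$. Since $A(X)$ is by definition a subgroup of $(\U,+)$, it already contains all $\mathbb{Z}$-linear combinations of elements of $X$; the task is to upgrade $\mathbb{Z}$ to the full constant field $\Const$. Fix $x\in X$ and $c\in\Const$; I want to show $cx\in A(X)$.

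First I would reduce to understanding the differential algebraic subgroup $A(\{x\})\subset\Ga$ generated by a single element $x$, since $cx$ lies in $A(\{x\})\subset A(X)$ once we know $A(\{x\})$ contains $\Const\cdot x$. By the classification of differential algebraic subgroups of $\Ga$ \cite[Proposition~11]{Cassidy}, $A(\{x\})$ is the zero set in $\Ga$ of a system of linear homogeneous differential equations, i.e.\ it equals $\{y\in\U : L_j(y)=0,\ j\}$ for some linear differential operators $L_j\in\U[\Delta]$; equivalently it is a $\Const$-subspace of $\U$ that is Kolchin-closed. Now the key point: since $x\in A(\{x\})$ and each defining operator $L_j$ is $\Const$-linear (the constants commute past all the $\partial_i$), we get $L_j(cx)=cL_j(x)=0$ for every $c\in\Const$, so $cx\in A(\{x\})$. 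This is really the whole argument — the Kolchin-closed subgroups of $\Ga$ are automatically $\Const$-modules because they are solution sets of linear differential equations over $\U$, and $\Const$-multiples of solutions are solutions.

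Alternatively, if one prefers not to invoke the explicit classification, I would argue directly: the Kolchin-closure $\overline{\langle x\rangle}$ of the cyclic group generated by $x$ is a differential algebraic subgroup of $\Ga$, hence (being a subgroup of $(\U,+)$ defined by differential-polynomial equations, which for a subgroup of $\Ga$ must be linear — a fact one can extract from the structure of $\U\{y\}$ as the coordinate ring) a $\Const$-subspace of $\U$; since it contains $x$ it contains $\Const\cdot x$, and it is contained in $A(X)$. Taking the span over all $x\in X$ and using that $A(X)$ is a group shows $A(X)\supset\Span_{\Const}X$.

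The main obstacle is justifying cleanly that a Kolchin-closed subgroup of $\Ga$ is defined by \emph{linear} homogeneous differential equations (equivalently, is a $\Const$-submodule of $\U$), which is precisely the content of Cassidy's classification \cite[Proposition~11]{Cassidy}; once that is granted, $\Const$-linearity of the defining operators makes the closure under scalar multiplication by constants immediate. I would therefore lean on \cite[Proposition~11]{Cassidy} rather than reprove it. A secondary point to be careful about is that $A(X)$ is generated by the $A(\{x\})$ for $x\in X$ together with sums, so that $\Span_{\Const}X\subset A(X)$ follows from $\Const\cdot x\subset A(\{x\})\subset A(X)$ for each individual $x$ and the fact that $A(X)$ is closed under addition.
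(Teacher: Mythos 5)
Your proposal is correct and uses exactly the paper's argument: by Cassidy's Proposition~11, a differential algebraic subgroup of $\Ga$ is the solution set of linear homogeneous differential equations, hence a $\Const$-vector space, so $A(X)$ contains $\Span_{\Const}X$. The paper simply applies this directly to $A(X)$ itself, so your reduction to the singleton groups $A(\{x\})$ is an unnecessary (though harmless) extra step.
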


\begin{proof}
By \cite[Proposition~11]{Cassidy}, $A(X)$ is given by a system of linear differential equations. Hence, $A(X)$ is vector space over $\Const$.
\end{proof}

For a subset $X\subset\U$ and $n\Ge 1$, let $A_n(X)\subset\GL_n(\U)$ be the set (not a group!) of upper-triangular unipotent matrices whose entries above the diagonal are from $A(X)$. Note that $A_n(X)$ is Kolchin-closed in $\GL_n(\U)$. Let $I_n\in\GL_n(\U)$ denote the identity matrix  and $I_{ij}$, $1\Le i,j\Le n$, denote the matrix whose elements are all $0$ except for the element in the $i^{th}$ row and $j^{th}$ column, which is $1$. Set $U_n(X)\subset\GL_n(\U)$ to be the Kolchin-closure of the group $\Gamma_n(X)$ generated by  $$\{I_n+x\cdot I_{ij}\:|\: x\in X,\, i<j\}.$$

\begin{lemma}\label{lem:UnType0}
Let $X$ be a subset of $\U$. Then $A_n(X)\subset U_n(X)$ and, if $X$ is finite, $\tau(U_n(X))\Le 0$.
\end{lemma}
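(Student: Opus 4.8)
The plan is to establish the two assertions in turn, bootstrapping the inclusion $A_n(X) \subset U_n(X)$ first and then deducing the type bound. For the inclusion, I would argue that $U_n(X)$, being the Kolchin-closure of a group generated by elementary unipotent matrices $I_n + x I_{ij}$ with $x\in X$, is a unipotent LDAG, and in particular it contains all the one-parameter ``root subgroups'' $\{I_n + a I_{ij} \mid a \in A(X)\}$: indeed $U_n(X)$ contains each $I_n + x I_{ij}$ for $x\in X$, hence (being a group) it contains the subgroup of $\GL_n(\U)$ they generate inside the $(i,j)$-root copy of $\Ga$, and then — being Kolchin-closed — it contains the Kolchin-closure of that subgroup, which by definition of $A(X)$ contains $\{I_n + a I_{ij}\mid a\in A(X)\}$. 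The remaining point is that the \emph{product} of such root elements, taken in the order that fills in the strictly upper triangle from, say, the superdiagonal upward, realizes an arbitrary element of $A_n(X)$; since products of root elements already lie in $\Gamma_n(X)$ and $A_n(X)$ is the Kolchin-closure of the set of such products (one must check that entrywise membership in $A(X)$ is exactly the Kolchin-closure of ``entries are $\Const$-combinations, then differential-algebraic combinations, of elements of $X$'' — this follows from $A(X)$ being Kolchin-closed by \cite[Proposition~11]{Cassidy} and Lemma~\ref{prop:ConstSpan}), we get $A_n(X)\subset U_n(X)$. Conversely $U_n(X)\subset A_n(X)$ is immediate because $A_n(X)$ is a Kolchin-closed \emph{group} (the product formula for unipotent upper-triangular matrices keeps entries inside $A(X)$, as $A(X)$ is closed under addition and under the $\Const$-bilinear cross terms, again by Lemma~\ref{prop:ConstSpan} and linearity of the defining equations of $A(X)$) containing all the generators of $\Gamma_n(X)$. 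Hence in fact $A_n(X) = U_n(X)$, which makes the first claim transparent and will also make the second one cleaner.

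For the type bound when $X$ is finite: by the equality just established it suffices to bound $\tau(A_n(X))$. Write $X = \{x_1,\dots,x_r\}$. By Lemma~\ref{prop:ConstSpan} and \cite[Proposition~11]{Cassidy}, $A(X)$ is a $\Const$-subspace of $\U$ cut out by linear differential equations; applying the Wronskian argument of Lemma~\ref{lem:DFGGGa} verbatim to the generators $x_1,\dots,x_r$ (which generate a Kolchin-dense — indeed all of — subgroup of the DFGG $A(X)$, since $A(X)$ is the differential algebraic subgroup of $\Ga$ \emph{generated} by the finite set $X$), we get that $A(X)\subset\Ga$ is a DFGG with $\tau(A(X))\Le 0$, i.e. $\trdeg_\U \U\langle A(X)\rangle < \infty$ by property~\ref{tau1}. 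Now $A_n(X)$, as a Kolchin-closed set, is a product of $\binom{n}{2}$ copies of $A(X)$ (one for each superdiagonal position), so its coordinate ring is generated by finitely many copies of $\U\{A(X)\}$ and hence its quotient field has finite transcendence degree over $\U$; again by property~\ref{tau1}, $\tau(A_n(X))\Le 0$, and therefore $\tau(U_n(X))\Le 0$.

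The main obstacle I expect is the bookkeeping in the first paragraph: verifying precisely that the Kolchin-closure of the group $\Gamma_n(X)$ generated by the elementary matrices equals the ``obvious'' closed set $A_n(X)$, i.e. that one does not generate anything larger (the products of root elements in various orders could \emph{a priori} produce entries outside $A(X)$ via the cross-terms). This is where one uses crucially that $A(X)$ is a $\Const$-vector space closed under the differential-polynomial operations, so that the cross-terms $x_i x_j$ and their derivatives appearing in a product of two root elements still lie in $A(\{x_i,x_j\})\subseteq A(X)$ — one should phrase this as: $A(X)$ contains the differential algebraic subgroup of $\Ga$ generated by $X$, which contains every such cross-term since it is a \emph{differential algebraic subgroup}, and note that entries of a product of elements of $A_n(X)$ are differential polynomials in entries of the factors. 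Once this closure-equals-$A_n(X)$ fact is pinned down, both halves of the lemma follow as above, and in particular the finite-$X$ case reduces cleanly to Lemma~\ref{lem:DFGGGa} plus property~\ref{tau1}.
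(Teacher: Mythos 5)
Your first inclusion $A_n(X)\subset U_n(X)$ is essentially salvageable (it is the paper's induction in disguise: one must factor column-by-column, as in the identity $\bigl(\begin{smallmatrix} A & u\\ 0 & 1\end{smallmatrix}\bigr)=\bigl(\begin{smallmatrix} I & u\\ 0 & 1\end{smallmatrix}\bigr)\bigl(\begin{smallmatrix} A & 0\\ 0 & 1\end{smallmatrix}\bigr)$, so that no cross-terms arise; your proposed ``superdiagonal upward'' order does produce cross-terms and would force coefficients like $b-ac$ that need not lie in $A(X)$). The genuine gap is your claim that $A_n(X)=U_n(X)$, on which your entire proof of the type bound rests. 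The reverse inclusion $U_n(X)\subset A_n(X)$ is false in general: $A_n(X)$ is not a group. A differential algebraic subgroup of $\Ga$ is a subgroup for \emph{addition}, and by \cite[Proposition~11]{Cassidy} it is a $\Const$-vector space cut out by linear differential equations; it is in no way closed under ring multiplication, so your assertion that the cross-terms $x_ix_j$ lie in $A(X)$ ``since it is a differential algebraic subgroup'' conflates the additive group law of $\Ga$ with multiplicative closure. Concretely, take $\Delta=\{\partial\}$, $n=3$, $X=\{x\}$ with $x\notin\Const$: then $A(X)=\Const\, x$ (it is the kernel of $y\mapsto x\partial y-(\partial x)y$), while $(I_3+xI_{12})(I_3+xI_{23})=I_3+xI_{12}+xI_{23}+x^2I_{13}$ lies in $\Gamma_3(X)\subset U_3(X)$ but not in $A_3(X)$, because $x^2\notin\Const\, x$. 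So $A_n(X)$ does not contain the generating group $\Gamma_n(X)$, your reduction of $\tau(U_n(X))$ to $\tau(A_n(X))$ collapses, and the second half of the lemma is unproved.

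This is exactly the difficulty the paper's proof is designed to circumvent: instead of $A_n(X)$ it introduces the larger Kolchin-closed set $B_n(X)$ whose $(i,j)$ entry ranges over $A\bigl(X_1^{\,j-i}\bigr)$ with $X_1=X\cup\{1\}$, proves the multiplicativity $A\bigl(X_1^r\bigr)A\bigl(X_1^s\bigr)\subset A\bigl(X_1^{r+s}\bigr)$ so that $B_n(X)$ \emph{is} a differential algebraic group containing the generators (hence $U_n(X)\subset B_n(X)$), and then, for finite $X$, applies Lemma~\ref{lem:DFGGGa} to each finite set $X_1^r$ to get $\tau\bigl(A\bigl(X_1^r\bigr)\bigr)\Le 0$ and therefore $\tau(B_n(X))\Le 0$ and $\tau(U_n(X))\Le 0$. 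Your final appeal to Lemma~\ref{lem:DFGGGa} and property~\ref{tau1} is fine as far as it goes, but it must be applied to the entries of a genuine group containing $U_n(X)$, which $A_n(X)$ is not.
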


\begin{proof}
We will use induction on $n$, with the case $n=1$ being trivial. Note that
\begin{equation}\label{eq:matprod}
\begin{pmatrix}
A & u\\
0 & 1
\end{pmatrix}=
\begin{pmatrix}
I & u\\
0 & 1
\end{pmatrix}
\begin{pmatrix}
A & 0\\
0 & 1
\end{pmatrix},
\end{equation}
for all $A\in\GL_{n-1}$, $u\in\Ga^{n-1}$, where $I$ is the identity matrix. Since, for all $u\in A(X)^{n-1}$,
$$
\begin{pmatrix}
I & u\\
0 & 1
\end{pmatrix}\in U_n(X),
$$
\eqref{eq:matprod} and the induction imply that
$$
A_n(X)\subset U_n(X).
$$
Set $$X_1:=X\cup\{1\}\subset\U.$$ For an integer $r> 0$, let $X_1^r\subset\U$ denote the set of all products of $r$ elements from $X_1$. In particular, $X\subset X_1^r$. The subset
$$
B_n(X):=\big\{B=(b_{ij})\in U_n(\U)\:|\: b_{ij}\in A\big(X_1^{j-i}\big)\big\}\subset \GL_n(\U)
$$
is Kolchin-closed, since so are $A\big(X_1^r\big)\subset\U$, $r>0$. Moreover, $B_n(X)$ is a subgroup of $\GL_n(\U)$. Indeed, for a subset $Y\subset\U$, let $\Gamma(Y)\subset\U$ stand for the additive group generated by $Y$. Then $A(Y)$ is the Kolchin closure of $\Gamma(Y)$. The product map
$$
\mu:\U\times\U\to\U,\ (x,y)\mapsto xy,
$$
sends $X_1^r\times X_1^s$ to $X_1^{r+s}$, whence, by bilinearity of the product,
$$
\mu\big(\Gamma\big(X_1^r\big)\times \Gamma\big(X_1^s\big)\big)\subset \Gamma\big(X_1^{r+s}\big)
$$
for all integers $r,s>0$. Since, for arbitrary subsets $Y,Z\subset\U$, the Kolchin closure of $Y\times Z\subset\U\times\U$ equals the (set) product of the Kolchin closures of $Y$ and $Z$, and $\mu$, being an algebraic map, sends Kolchin closures to Kolchin closures,
$$
A\big(X_1^r\big) A\big(X_1^s\big)=\mu\big(A\big(X_1^r\big)\times A\big(X_1^s\big)\big)\subset A\big(X_1^{r+s}\big).
$$
This implies that $B_n(X)$ is a (differential algebraic) subgroup of $\GL_n(\U)$. Since $B_n(X)$ contains every matrix from $U_n(\U)$ whose entries above the diagonal belong to $X$, we conclude that
\begin{equation}\label{eq:finemb}
U_n(X)\subset B_n(X).
\end{equation}
Suppose that $X$ is finite. Then, for every integer $r>0$, $X_1^r$ is finite. By Lemma~\ref{lem:DFGGGa},  $$\tau\big(A\big(X_1^r\big)\big)\Le 0.$$ The definition of $B_n(X)$ implies that $\tau(B_n(X))\Le 0$. Finally, by~\eqref{eq:finemb}, $\tau(U_n(X))\Le 0$.
\end{proof}

\begin{proposition}\label{prop:type0}
Let $G\subset\GL(V)$ be a DFGG such that $G/\Ru(G)$ is constant. Then $\tau(G)\Le 0$.
\end{proposition}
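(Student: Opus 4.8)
The plan is to reduce the general case to the situation covered by Lemma~\ref{lem:UnType0} by exploiting the hypothesis that $G/\Ru(G)$ is constant, together with the fact that $G$ is a DFGG. Pick a finite set $\gamma_1,\dots,\gamma_r\in G$ generating a Kolchin-dense subgroup $\Gamma$. Let $H=\overline{G}^Z$ be the Zariski closure of $G$ in $\GL(V)$, a linear algebraic group, and recall from Remark~\ref{rem:GbarG} that $\Ru(G)=\Ru(H)\cap G$. Since $G/\Ru(G)$ is constant, Proposition~\ref{prop:Constant} lets me conjugate so that the image of $G$ in $H/\Ru(H)$ lands in constant matrices; equivalently, after the Levi decomposition $H=M\ltimes\Ru(H)$ of the algebraic group $H$ (characteristic zero), I may assume the reductive part $M$ is a group of constant matrices and that the projection of each generator $\gamma_j$ to $M$ is constant.

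Next I would analyze the group generated by the $\gamma_j$ inside $M\ltimes\Ru(H)$. The key point is that the unipotent radical $\Ru(H)$ can be realized, after conjugation, as a group of upper-triangular unipotent matrices, so that $H\subset B$ for a Borel-type subgroup and $\Ru(H)\subset\mathbf{U}_N$ for suitable $N$. Writing $\gamma_j=m_j u_j$ with $m_j\in M$ constant and $u_j\in\Ru(H)$, the entries of all the $u_j$ and of all conjugates $m\,u_j\,m^{-1}$ by constant matrices $m\in M$ involve only the finitely many matrix entries of $u_1,\dots,u_r$ and constants. Let $X\subset\U$ be the finite set consisting of all these finitely many entries (of the $u_j$). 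I claim the Kolchin-dense group $\Gamma$, hence $G$, is contained in something of the form $M\ltimes U_N(X)$ in the notation of Lemma~\ref{lem:UnType0}: the point is that commutators, products and constant conjugates of the $u_j$ stay inside $B_N(X)\supset U_N(X)$, because $X$ is closed enough under the additive and multiplicative operations that generate the relevant entries — this is exactly the bookkeeping carried out in the proof of Lemma~\ref{lem:UnType0}. Thus $\Ru(G)=\Ru(H)\cap G\subset U_N(X)$ with $X$ finite, so $\tau(\Ru(G))\le\tau(U_N(X))\le 0$ by Lemma~\ref{lem:UnType0} and Property~\ref{tau1}/monotonicity of $\tau$ under inclusion of Kolchin-closed sets.

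Finally I would assemble the pieces using Property~\ref{tau2}: $\tau(G)=\max\{\tau(G/\Ru(G)),\tau(\Ru(G))\}$. We have just bounded $\tau(\Ru(G))\le 0$. For the quotient, $G/\Ru(G)$ is constant and reductive, hence differentially isomorphic to a group of constant matrices (Proposition~\ref{prop:Constant}), so its differential coordinate ring has finite transcendence degree over $\U$ and $\tau(G/\Ru(G))\le 0$ by Property~\ref{tau1}. Therefore $\tau(G)\le 0$.

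The main obstacle I expect is the middle step: controlling the unipotent part. It is not enough that each generator's unipotent component has type $0$ individually; I must show the \emph{whole} subgroup they generate inside $\Ru(H)$ — closed under the group operations and, crucially, under conjugation by the constant reductive part — still sits inside a set $U_N(X)$ with $X$ finite. This requires checking that the finite set $X$ of entries can be taken so that the additive group it generates, together with its $\Const$-span and the products $X_1^r$, absorbs all entries produced by the group law and by the $M$-action; the action of constant matrices is the delicate new ingredient not already in Lemma~\ref{lem:UnType0}, and getting a single finite $X$ that works is where the real care is needed. (If a clean finite $X$ is elusive, an alternative is to argue more abstractly that $\Ru(G)$, being the intersection of $G$ with $\Ru(H)$ and normal in a DFGG with constant reductive quotient, is itself "close to" differentially finitely generated and apply Lemma~\ref{lem:DFGGGa}-style Wronskian arguments entrywise; but I expect the explicit $U_N(X)$ route to be the cleanest.)
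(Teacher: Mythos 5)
Your overall strategy is the same as the paper's: split a dense finitely generated subgroup into constant reductive parts and unipotent parts, control the group generated by the unipotent components via the finite set $X$ of their entries together with Lemma~\ref{prop:ConstSpan} and Lemma~\ref{lem:UnType0}, and conclude with the type formula. The genuine gap is at the step ``I may assume the reductive part $M$ is a group of constant matrices and that the projection of each generator to $M$ is constant'' while simultaneously keeping $\Ru(H)$ (hence the $u_j$ and their $\Gamma_r$-conjugates) upper-triangular unipotent in the \emph{same} basis. First, the Levi $M$ of the Zariski closure cannot in general be made constant (it can be a full torus $\Gm(\U)$); only the image of $G$ in $M$, isomorphic to $G/\Ru(G)$, is constant. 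Second, and more importantly, Proposition~\ref{prop:Constant} produces a conjugation by an arbitrary matrix of $\GL_N(\U)$, and nothing guarantees this conjugation preserves the triangular form of $\Ru(H)$; conversely, a conjugation triangularizing $\Ru(H)$ may destroy constancy. This compatibility is exactly where the paper's proof does its real work: it fixes a composition series $V_0\subset\cdots\subset V_n$ of the $G$-module $V$, chooses complements $W_i$, proves $P=R\ltimes U$ with $\Ru(G)=G\cap U$, and then applies constancy \emph{block by block}, choosing bases $B_i\subset W_i$ whose $\Const$-span is $G_r$-invariant, so that the change of basis is block-diagonal and therefore keeps $U$ upper-triangular unipotent while making $G_r$ constant. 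You locate the delicacy in the bookkeeping of $X$ under products and constant conjugation, but that part is already covered by Lemmas~\ref{prop:ConstSpan} and~\ref{lem:UnType0}; the missing argument is this simultaneous block-diagonal reduction.

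A secondary gap is the passage from $\Gamma$ to $G$: you assert $\Gamma\subset M\ltimes U_N(X)$ ``hence $G$'' is contained in it, but $M$ (or even the constant group generated by the $m_j$) need not normalize $U_N(X)$, so this set is not obviously a group, and to transfer the containment to the Kolchin closure $G$ you need the enclosing set to be Kolchin closed. This is repairable: multiplication gives an isomorphism of Kolchin-closed sets $M\times\Ru(H)\simeq H$, so $M\cdot\big(U_N(X)\cap\Ru(H)\big)$ is Kolchin closed, and the unipotent component of any word in the $\gamma_j^{\pm1}$ is a product of conjugates of the $u_j^{\pm1}$ by constant matrices, hence lies in $U_N(X)$; but as written the step is unproved. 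The paper sidesteps it by enlarging $\Gamma$ to $\tilde\Gamma=\Gamma_r\ltimes\Gamma_u$, invoking Proposition~\ref{prop:SDirect} to get $\tilde G=G_r\ltimes G_u\supset G$, and bounding $\tau(G)\le\tau\big(\tilde G\big)=\max\{\tau(G_r),\tau(G_u)\}$, rather than applying Property~\ref{tau2} to $G$ and $\Ru(G)$ directly as you do. Once these two points are supplied, your argument coincides with the paper's.
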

\begin{proof}
Fix a flag of maximal length 
$$
\{0\}=V_0\subset V_1\subset\ldots\subset V_n=V
$$
of submodules of the $G$-module $V$. Then all the quotients $V_i/V_{i-1}$, $1\Le i\Le n$, are simple. For each $V_i$, $1\Le i \Le n$, let $W_i$ be a complementary subspace to $V_{i-1}$:
$$
V_i=V_{i-1}\oplus W_i\quad\text{(as vector spaces)}.
$$
We have
$$
V=\bigoplus_{i=1}^nW_i\quad\text{(as vector spaces)}.
$$
Consider the following algebraic subgroups of $\GL(V)$:
\begin{align*}
& P = \{g\in\GL(V)\:|\:gV_i\subset V_i,\: 1\Le i\Le n\},\\
& U = \{g\in\GL(V)\:|\:gv-v\in V_{i-1}\ \text{for all } v\in V_i, 1\Le i\Le n\},\\
& R = \{g\in\GL(V)\:|\:gW_i\subset W_i,\: 1\Le i\Le n\}.
\end{align*}
By definition, $U \subset P$ and $R \subset P$. Moreover, for all $h \in P$, $g \in U$, $i$, $1\Le i \Le n$, and $v \in V_i$, we have
$$
hgh^{-1}v - v = hgh^{-1}v - hh^{-1}v = h\big(gh^{-1}v - h^{-1}v\big) \in V_{i-1}.
$$
Therefore, $U$ is normal in $P$. Let now $$g \in U\cap R\quad \text{and}\quad v = w_1+\ldots+w_n,\ \  w_i \in W_i,\ \ 1\Le i \Le n.$$  We then have:
$$
gw_i -w_i \in W_i\cap V_{i-1} = \{0\},
$$
which implies that $g(v) = v$, that is, $g = e$. We will now show that $$P = RU.$$ For this, let $g \in P$. We will construct $t_g \in R$ such that $$t_g^{-1}g \in U$$ by induction on $i$.  By this, we mean that we will suppose  that, for $i \Ge 0$, $t_g$ is defined on $V_i$. We will then define it on $V_{i+1}$ extending its action on $V_i$. Let $v \in V_{i+1}$. There then  exist unique $u \in V_i$ and $w \in W_{i+1}$ such that $$v = u+w.$$ There also exist unique $u'\in V_i$ and $w' \in W_{i+1}$ such that $$g(w) = u'+w'.$$ 
Moreover, since $$g|_{V_i} : V_i \to V_i\quad \text{and}\quad g|_{V_{i-1}}: V_{i-1}\to V_{i-1}$$ are  isomorphisms  of vector spaces and $V_{i-1} \subsetneq V_i$, if $w\ne 0$, then $w'\ne 0$. We let $$t_g(v) := t_g(u)+w',$$
which is well-defined and invertible by the above.
In addition, if $z \in V_{i+1}$, $x,\,x' \in V_i$, and $y,\,y'\in W_{i+1}$ are such that
$$
z = x+y\quad\text{and}\quad g(y)=x'+y',
$$
then, for all $a,\,b \in \K$, by induction, we have
$$
t_g(av+bz)=t_g(au+bx)+aw'+by'=at_g(u)+bt_g(x)+aw'+by'=at_g(v)+bt_g(z),
$$
as $aw'+by' \in W_{i+1}$.
Therefore, $t_g$ is $\K$-linear. 
We will now show that, for  $v \in V_{i+1}$, 
\begin{equation}\label{eq:tg}
t_g^{-1}g(v)-v \in V_{i},
\end{equation}
Since $t_g$ is invertible and preserves $V_{i}$,~\eqref{eq:tg} is equivalent to $$g(v)-t_g(v) \in V_{i}.$$ This is true since 
$$ g(v)-t_g(v) = g(u)+g(w)-t_g(u)-w' = g(u)-t_g(u)+u'.$$
Thus,
$$
P=R\ltimes U.
$$
Since $V_i/V_{i-1}$ are all simple, the images of $G$ in $\GL(V_i/V_{i-1})$ are reductive (see, e.~g., \cite[pf. of Theorem~4.7]{diffreductive}). Therefore, $$\Ru(G)\subset U.$$ Since $G\cap U$ is a normal unipotent subgroup of $G$, we conclude
\begin{equation}\label{eq:radGsubU}
\Ru(G)=G\cap U.
\end{equation}
Let $\Gamma\subset G$ be a Kolchin-dense subgroup generated by a finite set $S\subset G$. Since $G\subset P$, for every $s\in S$, there are $r_s\in R$ and $u_s\in U$ such that $s=r_su_s$. Let $\tilde{\Gamma}\subset P$ be the subgroup generated by $\{r_s, u_s\:|\: s\in S\}$ and $\tilde{G}$ be the Kolchin-closure of $\tilde{\Gamma}$ in $P$. By construction, 
$\Gamma\subset\tilde{\Gamma}$, hence, 
\begin{equation}\label{eq:sub}
G\subset\tilde{G}.
\end{equation}
Note that $$\tilde{\Gamma}=\Gamma_r\ltimes\Gamma_u,$$ where $\Gamma_r$ and $\Gamma_u$ are the subgroups of $\tilde{\Gamma}$ generated by $$\big\{r_s\:|\:s\in S\big\}\qquad\text{and}\qquad\big\{ru_sr^{-1}\:\big|\:s\in S,\,r\in\Gamma_r\big\},$$ respectively. Let $G_r\subset R$ and $G_u\subset U$ be the Kolchin closures of $\Gamma_r$ and $\Gamma_u$, respectively. Then, by Proposition~\ref{prop:SDirect},
\begin{equation}\label{eq:semidirect}
\tilde{G}=G_r\ltimes G_u.
\end{equation}
Let us show that
$$
\Ru(G)\subset G_u\qquad\text{and}\qquad G_r\simeq G/\Ru(G).
$$
By~\eqref{eq:sub},
$$
\Ru(G)\subset\tilde{G}\cap U=G_u.
$$
Now, the projection 
$$
\gamma : P=R\ltimes U \to R
$$
maps $\Gamma$ onto $\Gamma_r$ by construction. Hence, $\gamma(G)=G_r$. Since, by~\eqref{eq:radGsubU},
$$
G\cap\Ker\gamma=G\cap U=\Ru(G),
$$
we conclude $G_r\simeq G/\Ru(G)$.

Since $G_r$ is constant, all its quotients are constant. Therefore, by Proposition~\ref{prop:Constant}, for every $i$, $1\Le i\Le n$, one can choose a basis $B_i\subset W_i$ whose $\Const$-span is $G_r$-invariant. Identifying $\GL(V)$ with $\GL_n(\U)$ using the basis $\bigcup_iB_i$ of $V$, we see that $$G_r\subset\GL_n(\Const)\quad \text{and}\quad U\subset U_n(\U).$$ Let $X\subset\U$ be the (finite) set of matrix entries of all $u_s$, $s\in S$. Note that, if $c\in\GL_n(\Const)$, the matrix entries of $cu_sc^{-1}$ belong to the $\Const$-span of $X$. Hence, by Lemma~\ref{prop:ConstSpan}, for all $r\in\Gamma_r$ and $s\in S$,
$$
ru_sr^{-1}\in A_n(X).
$$
Hence, by Lemma~\ref{lem:UnType0},
$$
\Gamma_u\subset U_n(X).
$$
By the same Lemma, since $G_u\subset U_n(X)$, $\tau(G_u)\Le 0$. We conclude that $$\tau(G)\Le \tau(\tilde{G})=\max\{\tau(G_r),\tau(G_u)\}\Le 0.\qedhere$$
\end{proof}

 We will now give an example of an LDAG $G$ that is DFGG but with $\tau(\Ru(G))=1$. In particular, it shows that the statement of Theorem~\ref{thm:Main} becomes false if one removes the condition that $G/\Ru(G)$ be constant. As is the case for the example given in~\cite[p.~159]{MichaelGmGa}, it also demonstrates that \cite[Proposition~1.2]{MichaelGmGa} does not directly generalize from linear algebraic groups to LDAGs.

\begin{example}(see also \cite[Example~7.2]{PhyllisMichael}) Let $\Delta=\{\partial\}$ and 
$$G = \left\{g(a,b) := \begin{pmatrix} a &b\\0&1\end{pmatrix}\:\Big|\:  a\in \U^*,b \in \U, \partial(\partial(a)/a)=0\right\}   \subset \GL_2.$$ 
Note that $G = G_1\ltimes G_2$, where 
$$G_1 = \left\{g(a,0) := \begin{pmatrix} a &0\\0&1\end{pmatrix}\:\Big|\: a\in \U^*, \partial(\partial(a)/a)=0\right\}$$ and $$G_2 = \left\{g(1,b) := \begin{pmatrix} 1 &b\\0&1\end{pmatrix}\:\Big|\: b \in \U\right\}.$$ 
Let $a\in \U$ be such that $\partial(a) \ne 0$ and $\partial(\partial(a)/a)=0$. We claim that $\{g(a,0), g(1,1)\}$ generates a Kolchin-dense subgroup $H$ of $G$. Since any proper subgroup of $G_1$ is constant, we have that $g(a,0)$ generates a Kolchin-dense subgroup of $G_1$. Note that $$g(a^n,0)g(1,1)g(a^{-n},0) = g(1,a^n) \in G_2.$$ Since $a$ is transcendental over $\U^\partial$,  $1,a,a^2,\ldots,a^i$ are linearly independent over $\U^\partial$ for all $i$. Furthermore, since the $\U$-points of every proper differential algebraic subgroup of $\Ru(G) \cong \Ga$ form a finite-dimensional vector space over $\U^\partial$, $H$ contains a Kolchin-dense subset of $G_2$.  Therefore, the Kolchin-closure of $H$ is $G$.  Since the type of $G_2$ is $1$, $\tau(\Ru(G)) = 1$.
\end{example}

\section{Parameterized Picard--Vessiot Extensions}\label{sec:PPV}

\subsection{Definitions and Structure}  We briefly recall some of the definitions and results associated with the parameterized Picard--Vessiot theory.  Let $k$ be a $\Delta' = \{\partial, \partial_1, \ldots , \partial_m\}$-field and 
\begin{equation}\label{EQN:LDE}
\partial Y = AY, \quad A \in \Mn_n(k)
\end{equation}
a linear differential equation (with respect to $\partial$) over $k$.  A {\em parameterized Picard--Vessiot extension (PPV-extension)} $K$ of $k$ associated with \eqref{EQN:LDE} is a $\Delta'$-field $K \supset k$ such that there exists a $Z \in \GL_n(K)$ satisfying $\partial Z=AZ$, $K^\partial = k^\partial$, and   $K$ is generated over $k$ as a $\Delta'$-field by the entries of $Z$ (i.e., $K = k\langle Z\rangle$).  

The field $k^\partial$ is a $\Delta = \{\partial_1, \ldots, \partial_n\}$-field and, if it is differentially closed, a PPV-extension associated with \eqref{EQN:LDE} always exists and is unique up to $\Delta'$-$k$-isomorphism~\cite[Proposition~9.6]{PhyllisMichael}. Moreover, if $k^\partial$ is relatively differentially closed in $k$, then $K$ exists as well~\cite[Thm~2.5]{GGO} (although it may not be unique). Some other situations concerning  the existence of $K$ have been also treated in~\cite{Wibmer:existence}. 

If $K = k\langle Z\rangle$ is a PPV-extension of $k$, one defines the {\em parameterized Picard--Vessiot group (PPV-group)} of $K$ over $k$ to be
  $$
 G:= \{\sigma : K\to K\:|\: \sigma\text{ is a field automorphism, } \sigma\delta=\delta\sigma\ \forall \delta\in\Delta', \text{ and } \sigma(a)=a,\ \forall a\in k\}.
  $$
For any $\sigma \in G$, one can show that there exists a matrix $[\sigma] \in \GL_n\big(k^\partial\big)$ such that $\sigma(Z) = Z[\sigma]$ and the map $\sigma\mapsto [\sigma]$ is an isomorphism of $G$ onto a differential algebraic subgroup (with respect to $\Delta$) of $\GL_n\big(k^\partial\big)$. 

One can also develop the PPV-theory in the language of modules. 
A finite-dimensional vector space $M$ over the $\Delta'$-field $k$ together with a map $\partial : M \to M$ 
  is called a {\it parameterized differential module} if $$\partial(m_1+m_2) = \partial(m_1)+\partial(m_2)\ \ \text{and}\ \ \partial(am_1)=\partial(a)m_1+a\partial(m_1),\ \ m_1,m_2 \in M,\ a\in k.$$ 
 Let $\{e_1,\ldots,e_n\}$ be a $k$-basis  of $M$ and $a_{ij} \in L$ be such that $$\partial(e_i) = -\sum\nolimits_j a_{ji}e_j,\quad 1\Le i \Le n.$$ As in~\cite[\S1.2]{Michael}, for $v = v_1e_1+\ldots+v_ne_n$, 
 $$\partial(v) = 0\quad \Longleftrightarrow\quad \partial\begin{pmatrix}v_1\\
 \vdots\\
 v_n\end{pmatrix}= A\begin{pmatrix}v_1\\
 \vdots\\
 v_n\end{pmatrix},\quad  A := (a_{ij})_{i,j=1}^n.$$ Therefore, once we have selected a basis, we can associate a linear differential equation of the form \eqref{EQN:LDE} with $M$. Conversely, given such an equation, we define a map $$\partial:k^n\rightarrow k^n,\quad\partial(e_i) = -\sum\nolimits_j a_{ji}e_j,\quad A=(a_{ij})_{i,j=1}^n.$$  This  makes $k^n$ a parameterized differential module. The collection of parameterized differential modules over $k$ forms an abelian tensor category. In this category, one can define the notion of prolongation $M \mapsto P_i(M)$ similar to the notion of prolongation of a group action as in~\eqref{eq:prolongation}. 
 
 For example, if $\partial Y = AY$ is the differential equation associated with the module $M$, then (with respect to a suitable basis) the equation associated with $P_i(M)$ is
 $$
 \partial Y = \begin{pmatrix} A&\partial_i(A)\\ 0 & A \end{pmatrix}Y.
 $$
 Furthermore, if $Z$ is a solution matrix of $\partial Y = AZ$, then 
 $$
 \begin{pmatrix} Z & \partial_i(Z) \\ 0 & Z\end{pmatrix}
 $$
 satisfies this latter equation. Similar to the $s^{th}$ total prolongation of a group, we define the {\em $s^{th}$ total prolongation $P^s(M)$ of a module $M$}  as 
 $$
 P^s(M) = P_1^sP_2^s \cdots P_m^s(M).
 $$
 If $K$ is a PV-extension for \eqref{EQN:LDE}, one can define a $k^\partial$-vector space
 $$
 \omega(M) := \Ker(\partial: M\otimes_kK \to M\otimes_kK).
 $$
The correspondence $M \mapsto \omega(M)$ induces a functor $\omega$ (called a differential fiber functor) from the category of differential modules  to the category of finite-dimensional vector spaces over $k^\partial$ carrying the $P_i$ into the $P_i$ (see  \cite[Defs.~4.9,\,4.22]{GGO}, \cite[Definition~2]{OvchTannakian}, \cite[Definition~4.2.7]{Moshe}, \cite[Definition~4.12]{Moshe2012} for more formal definitions). Moreover, $$\big(\Rep_G,\mathrm{forget}\big)\cong\left(\big\langle P_1^{i_1}\cdot\ldots P_m^{i_m}(M)\:\big|\: i_1,\ldots,i_m\Ge 0\big\rangle_{\otimes},\omega\right)$$ as differential tensor categories \cite[Thms.~4.27,\,5.1]{GGO}. This equivalence will be further used in \S\ref{sec:algorithm} to help explain the algorithm that calculates the PPV-group of a PPV-extension of type $0$. Before we describe this algorithm, we will describe the relation between the PPV-theory and the usual PV-theory \cite{Kol,Michael} and give a more detailed description of the PPV-group.

Let $K = k\langle Z \rangle$ be a PPV-extension of $k$ with $\partial Z = AZ$  and assume that $k^\partial$ is a $\Delta$-differentially closed field.  We begin by recalling the structure of $K$.  Recall that $$\Delta' = \{\partial\} \cup \Delta,$$ where $\Delta = \{\partial_1, \ldots , \partial_m\}$. Let $Y$ be an $n\times n$ matrix of $\Delta$-differential indeterminates and  $k\big\{Y, \frac{1}{Y}\big\}_\Delta$ be the ring of $\Delta$-differential polynomials. We can extend (i.e., prolong) $\partial$ to this ring by setting $$\partial Y = AY\quad\text{and}\quad \partial(\theta Y) = \theta(AY)\ \text{for any}\  \theta \in \Theta.$$ The $\Delta'$-ring $k\left\{Z,\frac{1}{\det Z}\right\}$ is called a PPV-ring for $\partial Y = AY$, and we may write this as
$$ 
k\{Z,1/\det Z\} = k\{Y, 1/\det Y\}_\Delta/I,
$$
where  $I$ is a maximal (and, therefore, prime) $\Delta'$-ideal. As mentioned above, $I$ is  the radical of a finitely generated differential ideal, that is, $$I = \sqrt{[p_1, \ldots, p_\ell]},\quad p_i \in  k\{Y,1/\det Y\}_\Delta.$$ Given $g \in \GL_n(k)$, the map $Y \mapsto Yg$ induces a differential isomorphism $$\phi_g:k\{Y, 1/\det Y\} \rightarrow k\{Y, 1/\det Y\}.$$ It is clear \cite[p.~146]{PhyllisMichael} that the PPV-group $G$ can be described as
$$
G = \left\{ g \in \GL_n\big(k^\partial\big) \ \big| \ \phi_g(I) \subset I\right\}.
$$
For any nonegative integer $s$, let $$\Psi(s) = \left\{\theta = \partial_1^{i_1}\partial_2^{i_2}\cdot\ldots\cdot \partial_m^{i_m} \in \Theta, \  i_j \Le s, \ j = 1, \ldots , m \right\}$$
and $$\quad R_s = k[\theta(Y), 1/\det Y : \theta \in \Psi(s)].$$
 For $g \in \GL_n\big(k^\partial\big)$, we may restrict $\phi_g$ to $R_s$, and this is the automorphism induced by  the $s^{th}$ total prolongation of $\GL_n$ (see \S\ref{sec:rep}).
Let 
$$
I_s = I\cap R_s\quad \text{and}\quad 
 K_s = k\big(\theta(Z) : \theta \in \Psi(s)\big) = \Quot(R_s/I_s).
 $$  
One can show, as in \cite[Proposition~6.21]{CharlotteMichael}, or, with a weaker assumption on $k^\partial$ but restricted to $|\Delta| =1$ in \cite{Wibmer:existence},  that $K_s$ is a Picard--Vessiot extension of $k$ corresponding to the $s^{th}$ total prolongation of the equation $\partial Y = AY$. Furthermore, the restriction of elements of the PPV-group $G$ to $K_s$ yields a group that is Zariski-dense in the PV-group $H$ of $K_s$ over $k$. The PV-group $H$ of $K_s$ over $k$ can be identified with a Zariski-closed subgroup of $\GL_{N_s}\big(k^\partial\big)$. For $$h \in \GL_{N_s}\big(k^\partial\big)\quad \text{and}\quad q \in k[\theta(Y), 1/\det Y : \theta \in \Psi(s)],$$ let $q^h$ denote the polynomial resulting from the change of variables induced by $h$. We then can identify $H$ as 
$$
H= \left\{ h \in \GL_{N_s}\big(k^\partial\big) \ \big| \ I_s^h \subset I_s\right\}.
$$
We now will show the following result.  
\begin{proposition}\label{prop:defeq} Let $k$, $K$, and $\{p_i\}$ be as above.  If $s \Ge \max\{\ord p_i\}$, then  the PPV-group of $K$ over $k$ is equal to 
$$
G = \left\{ g \in \GL_n\big(k^\partial\big) \ \big| \ P^s(g) \in H \right\},
$$
where $H \subset \GL_{N_s}\big(k^\partial\big)$ is the PV-group of $K_s$ over $k$ and $P^s(g)$ denotes the $s^{th}$ total prolongation of $g$.
\end{proposition}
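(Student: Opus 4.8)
The plan is to read the proposition off from the two explicit descriptions already recorded above,
$$
G=\big\{g\in\GL_n\big(k^\partial\big)\:\big|\:\phi_g(I)\subset I\big\}\qquad\text{and}\qquad H=\big\{h\in\GL_{N_s}\big(k^\partial\big)\:\big|\: I_s^h\subset I_s\big\},
$$
together with the already-noted fact that, for $g\in\GL_n(k^\partial)$, the restriction of $\phi_g$ to $R_s$ is exactly the change of variables induced by $P^s(g)$; under this identification $I_s^{P^s(g)}=\phi_g(I_s)$, so the condition $P^s(g)\in H$ means $\phi_g(I\cap R_s)\subset I\cap R_s$. One first disposes of the harmless point that $P^s(g)$ really lies in $\GL_{N_s}(k^\partial)$: the entries of $P^s(g)$ are polynomials in the entries of $g$ and their iterated $\partial_j$-derivatives, all of which are $\partial$-constants since $\partial$ commutes with each $\partial_j$ and $\partial(g)=0$. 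Thus the statement to prove reduces to: for $g\in\GL_n(k^\partial)$ and $s\Ge\max_i\ord p_i$,
$$
\phi_g(I)\subset I\quad\Longleftrightarrow\quad\phi_g(I\cap R_s)\subset I\cap R_s.
$$

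The implication giving $G\subset\{g\:|\:P^s(g)\in H\}$ needs no hypothesis on $s$. Since $\phi_g$ sends $\theta(Y)$ to $\theta(Yg)$, which by the Leibniz rule is a $k$-linear combination of products $\theta'(Y)\,\theta''(g)$ with $\theta',\theta''$ dividing $\theta$, both $\phi_g$ and $\phi_{g^{-1}}$ carry $R_s$ into itself, so $\phi_g(R_s)=R_s$; hence, if $\phi_g(I)\subset I$, then $\phi_g(I\cap R_s)=\phi_g(I)\cap R_s\subset I\cap R_s$.

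For the reverse implication — the one that uses $s\Ge\max_i\ord p_i$ — the structural point I would isolate is that $\phi_g$ is not merely a $\Delta$-automorphism of $k\{Y,1/\det Y\}_\Delta$ but a $\Delta'$-automorphism, precisely because the entries of $g$ are $\partial$-constants:
$$
\partial\big(\phi_g(Y)\big)=\partial(Yg)=\partial(Y)\,g+Y\,\partial(g)=(AY)g=A\,\phi_g(Y)=\phi_g(\partial Y),
$$
and the analogous check on $\theta(Y)$ follows since $\phi_g$ already commutes with $\Delta$. Granting this, suppose $\phi_g(I_s)\subset I_s$. The hypothesis $\ord p_i\Le s$ forces each $p_i\in R_s$, and $p_i\in I$, so $p_i\in I_s$ and hence $\phi_g(p_i)\in I_s\subset I$ for every $i$. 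Since $\phi_g$ commutes with every derivation in $\Delta'$, it carries the differential ideal $[p_1,\ldots,p_\ell]$ into $[\phi_g(p_1),\ldots,\phi_g(p_\ell)]$, which lies in the differential ideal $I$; and since $\phi_g$ is a ring automorphism it commutes with forming radicals, so
$$
\phi_g(I)=\phi_g\big(\sqrt{[p_1,\ldots,p_\ell]}\big)=\sqrt{\phi_g\big([p_1,\ldots,p_\ell]\big)}\subset\sqrt I=I,
$$
i.e.\ $g\in G$, which completes the proof.

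I expect the only non-formal point to be the recognition that $\phi_g$ commutes with $\partial$ — this is exactly what lets one reduce membership in the (infinitely generated) differential ideal $I$ to checking the finite set $\{p_i\}$ — and the bookkeeping observation that the bound $s\Ge\max_i\ord p_i$ is precisely what is needed to place the $p_i$ inside $R_s$. Everything else is formal manipulation with the descriptions of $G$ and $H$ recalled above, so I do not anticipate any real obstacle beyond being careful with these two points.
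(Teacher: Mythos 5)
Your proposal is correct and follows essentially the same route as the paper's proof: the forward inclusion comes from the identification of $\phi_g|_{R_s}$ with the action of $P^s(g)$, and the reverse inclusion uses that $s\Ge\max_i\ord p_i$ places the generators $p_i$ inside $I_s$, so invariance of $I_s$ forces invariance of $I=\sqrt{[p_1,\ldots,p_\ell]}$ under the $\Delta'$-automorphism $\phi_g$. You merely spell out details (e.g.\ that $\phi_g$ commutes with $\partial$ and preserves $R_s$) that the paper leaves implicit in the discussion preceding the proposition.
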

\begin{proof}
The discussion preceding the statement of the proposition shows that, if $g$ is an element of the PPV-group, then $g \in G$.  Conversely, if $P^s(g) \in H$, then $P^s(g)$ leaves $I_s$ invariant. Since $I_s$ contains the (radical) generators of $I$, $\phi_g$ leaves $I$ invariant.
Therefore, $g$ is in the PPV-group of $K$ over $k$.
\end{proof} 

This proposition implies that, for $s$ as in the proposition,  once we know the defining equations of the PV-group of $K_s$ over $k$, we can find defining equations for the PPV-group of $K$ over $k$.  Note that, if we can find a bound $s$ on the orders of the $p_i$ generating $I$ as above, then we have reduced the problem of finding the PPV-group of $K$ over $k$ to the problem of finding the PV-group of $K_s$ over $k$. 

 For certain fields $k$, Hrushovski~\cite{Hrushovski} has solved this latter problem.  In general, we do not know how to find such bound $s$ on the orders of the $p_i$ but we will show in \S\ref{sec:algorithm} that we can find such an $s$ when $K$ has differential type $0$ over $k$. This will depend on the following result.

\begin{proposition}\label{prop:zeroppv} Let $k$, $K$, $s$, $K_s$, $I$, and $I_s$ be as above. If $s$ is an integer such that $K_s = K_{s+1}$, then $$\big[I_{s+1}\big]_\Delta = I,$$ where $[I_{s+1}]_\Delta$ is the $\Delta$-differential ideal generated by $I_{s+1}$. In particular $I_s$ contains the (differential and, therefore, radical differential) generators of $I$. Furthermore, the differential type of of $K$ over $k$  is $0$ and the differential type of the PPV-group $G$ is also $0$. 

Conversely, if the differential type of $G$ is $0$, then the differential type of  $K$ over $k$ is $0$ and there exists an $s$ such that $K_s = K_{s+1}$. \end{proposition}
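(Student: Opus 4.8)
I treat the two implications separately. The forward implication has a "soft" core — deducing $K=K_s$ and hence the two statements about differential type — and a "hard" core — the equality of differential ideals $[I_{s+1}]_\Delta=I$, which I would reduce to a statement about minimal primes plus reducedness. The converse is short once one has the torsor dictionary between $K/k$ and $G$.

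\textbf{Forward direction: $K=K_s$ and the differential types.} Assume $K_s=K_{s+1}$. The first step is to show $K_s=K$. The field $K_s=k\big(\theta(Z)\:|\:\theta\in\Psi(s)\big)$ is always stable under $\partial$: since $\partial$ commutes with every $\theta\in\Theta$, one has $\partial(\theta(Z))=\theta(AZ)$, and the generalized Leibniz rule writes $\theta(AZ)$ as a polynomial, with coefficients in $k$ (derivatives of entries of $A$), in the $\theta'(Z)$ with $\theta'\in\Psi(s)$; hence $\partial(K_s)\subseteq K_s$. Also, for each $j$ one has $\partial_j(\theta(Z))=(\partial_j\theta)(Z)$ with $\partial_j\theta\in\Psi(s+1)$, so $\partial_j(K_s)\subseteq K_{s+1}=K_s$. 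Thus $K_s$ is a $\Delta'$-subfield of $K$ containing $k$ and the entries of $Z$, whence $K=K_s$ because $K$ is $\Delta'$-generated over $k$ by the entries of $Z$. In particular $K$ is finitely generated \emph{as a field} over $k$, so $\trdeg_k K<\infty$ and, by Property~\ref{tau1}, $\tau(K/k)=0$. By the torsor property of PPV-extensions (the PPV-ring is a $G$-torsor over $k$, giving $\trdeg_k K=\trdeg_{k^\partial}k^\partial\langle G\rangle$; see \cite{PhyllisMichael,GGO,CharlotteMichael}), $\trdeg_{k^\partial}k^\partial\langle G\rangle<\infty$, and Property~\ref{tau1} applied to the LDAG $G$ over the differentially closed field $k^\partial$ gives $\tau(G)=0$.

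\textbf{Forward direction: $[I_{s+1}]_\Delta=I$.} First, $R_{s+1}$ is $\partial$-stable (same Leibniz computation, now for $Y$), so $I_{s+1}=I\cap R_{s+1}$ is $\partial$-stable, and therefore $J:=[I_{s+1}]_\Delta$ is a $\Delta'$-ideal, clearly contained in $I$. For the reverse inclusion I would show that every minimal prime $P$ of $k\{Y,1/\det Y\}_\Delta$ over $J$ equals $I$. Since $\Char=0$ and $J$ is a $\Delta'$-ideal, $P$ is a prime $\Delta'$-ideal; because $R_{s+1}/I_{s+1}$ is a Picard--Vessiot ring (hence $\partial$-simple), the $\partial$-ideal $(P\cap R_{s+1})/I_{s+1}$ is trivial, so $P\cap R_{s+1}=I_{s+1}$ and $R_{s+1}/I_{s+1}\hookrightarrow k\{Y,1/\det Y\}_\Delta/P$. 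Using that $\Quot(R_{s+1}/I_{s+1})=K_{s+1}=K$ is a $\Delta'$-field (by the first step) and that it has no new $\partial$-constants, one identifies $k\{Y,1/\det Y\}_\Delta/P$, compatibly with the quotient map $Y\mapsto Z$, with the PPV-ring $k\{Z,1/\det Z\}$, whence $P=I$. This already yields $\sqrt J=I$, i.e. $I_{s+1}$ contains radical differential generators of $I$ of order $\le s+1$ (and the same argument with $s$ in place of $s+1$, using that $R_s/I_s$ is also a Picard--Vessiot ring with quotient field $K_s=K$, gives $\sqrt{[I_s]_\Delta}=I$). To upgrade $\sqrt J=I$ to the equality $J=I$ of differential ideals it suffices that $k\{Y,1/\det Y\}_\Delta/J$ be reduced; this holds because $R_{s+1}/I_{s+1}$ is a smooth $k$-algebra (a torsor under the Picard--Vessiot group of $K_{s+1}/k$, an algebraic group over $k^\partial$) and its free differential prolongation $k\{Y,1/\det Y\}_\Delta/J$ is again reduced.

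\textbf{Converse.} Assume $\tau(G)=0$. By Property~\ref{tau1}, $\trdeg_{k^\partial}k^\partial\langle G\rangle<\infty$, so by the torsor property $\trdeg_k K<\infty$ and $\tau(K/k)=0$. Now $K=\bigcup_s K_s$ with $K_s\subseteq K_{s+1}$, so for some $N$ we have $\trdeg_k K_N=\trdeg_k K$; then $K/K_N$ is algebraic, and since $K=K_N\langle Z\rangle$ and in characteristic $0$ the $\Delta'$-derivatives of an element algebraic over a $\Delta'$-field lie in the field it generates, $K=K_N(\text{entries of }Z)$ is \emph{finite} over $K_N$. Hence the degrees $[K_t:K_N]$ for $t\ge N$ are non-decreasing and bounded by $[K:K_N]$, so they stabilize, and $K_s=K_{s+1}$ for some $s$.

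\textbf{Main obstacle.} The delicate point is the final step of the forward direction: upgrading $\sqrt{[I_{s+1}]_\Delta}=I$ to $[I_{s+1}]_\Delta=I$ — equivalently, that $k\{Y,1/\det Y\}_\Delta/[I_{s+1}]_\Delta$ is reduced and irreducible. The irreducibility follows from the minimal-primes computation above, so the real content is the reducedness, which one extracts from the smoothness of Picard--Vessiot rings together with the fact that their differential prolongations stay reduced. Everything else is the combination of the "no new transcendentals" consequence of $K_s=K_{s+1}$ (yielding $K=K_s$) with the standard correspondence between PPV-extensions and their Galois groups.
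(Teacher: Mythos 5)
The soft parts of your forward direction (showing $K_s$ is a $\Delta'$-subfield, hence $K=K_s$, hence $\tau(K/k)=0$, and transferring this to $G$ via the torsor dictionary) are fine and consistent with the paper. The genuine gap is exactly where you locate it: the upgrade from $\sqrt{[I_{s+1}]_\Delta}=I$ to $[I_{s+1}]_\Delta=I$. Your justification — $R_{s+1}/I_{s+1}$ is smooth and ``its free differential prolongation is again reduced'' — does not apply to the object at hand: $k\{Y,1/\det Y\}_\Delta/[I_{s+1}]_\Delta$ is \emph{not} the free differential prolongation of the $k$-algebra $R_{s+1}/I_{s+1}$, because inside $k\{Y,1/\det Y\}_\Delta$ the $\Delta$-derivatives of the generators $\theta Y$, $\theta\in\Psi(s)$, are not fresh prolongation variables but are identified with other generators, $\partial_j(\theta Y)=(\partial_j\theta)Y$. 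So your quotient is the free prolongation with additional identifications imposed, and reducedness of such a quotient is not a consequence of smoothness of $R_{s+1}/I_{s+1}$ alone; it is essentially equivalent to what you are trying to prove. Moreover, your identification of $k\{Y,1/\det Y\}_\Delta/P$ with the PPV-ring is asserted in one sentence but needs an induction that produces the required relations \emph{inside} $J=[I_{s+1}]_\Delta$ (not merely inside $I$, which you may not assume contains $P$), with control of denominators; with only field-level information ($\theta(Z)\in K_s$) this can be pushed through for the radical statement, but not for the ideal equality. The paper avoids all of this with one device you are missing: since the entries of $\partial_1^{s+1}(Z)$ lie in $K_s$ and satisfy scalar linear equations over $k$, they lie in the PV \emph{ring} $k[Z,\partial_1Z,\ldots,\partial_1^sZ,1/\det Z]$ by \cite[Corollary~1.38]{Michael}, so $I_{s+1}$ contains \emph{polynomial} (denominator-free) relations $\partial_1^{s+1}y-p_z$. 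Differentiating these, every differential polynomial is congruent modulo $J$ to an element of $R_s$, and then maximality of $I_s$ as a $\partial$-ideal ($\partial$-simplicity of the PV ring) shows $J$ is a maximal $\Delta'$-ideal; since $J\subset I$, this gives $J=I$ directly, with primality and reducedness for free and no smoothness argument.

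Two smaller points. In the converse, your finiteness step misapplies the fact about derivatives of algebraic elements: $K_N$ is not known to be $\Delta$-stable, and ``$K=K_N(\text{entries of }Z)$'' is vacuous since the entries of $Z$ already lie in $K_0$. The repair is standard and does not even need $[K:K_N]<\infty$: once $\trdeg_kK_N=\trdeg_kK$, each $\theta(Z)$ with $\theta\in\Psi(N+1)$ is algebraic over $K_N$; differentiating its minimal polynomial over $K_N$ and using $\partial_j(K_N)\subset K_{N+1}$ gives $\partial_j(\theta(Z))\in K_{N+1}$, i.e.\ $K_{N+2}=K_{N+1}$, which is the desired stabilization (this is also what underlies the paper's assertion that $K$ is finitely generated as a field). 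Finally, for the last claim of the forward direction the paper argues via the equality of (differential) transcendence data of $K/k$ and of $G$ from \cite[Proposition~6.26]{CharlotteMichael}; your torsor-based statement $\trdeg_kK=\trdeg_{k^\partial}k^\partial\langle G\rangle$ is acceptable but should be tied to that reference rather than left as folklore.
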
 

\begin{proof} To simplify the notation, we shall assume that $m=1$, that is,  $\Delta = \{\partial_1\}$ and $\Delta' = \{\partial, \partial_1\}$. Let $K = k\langle Z \rangle$, and assume that $K_s= K_{s+1}$.  This implies that $$\partial_1^{s+1}(Z) \in K_{s}.$$ The entries of  $\partial_1^{s+1}(Z)$ satisfy scalar linear differential equations over $k$, and so each entry of $\partial_1^{s+1}(Z)$ lies in the PV-ring in $K_s$ \cite[Corollary~1.38]{Michael}. This latter ring is $$k\big[Z, \partial_1Z, \ldots ,\partial_1^sZ, 1/\det Z\big].$$ In particular, for each entry $\partial_1^{s+1}z$ of $\partial_1^{s+1}(Z)$, there exists a polynomial $$p_z\big(Y, \partial_1Y, \ldots , \partial_1^sY, 1/\det Y\big)$$ such that 
\begin{equation}\label{eqn:lin}\partial_1^{s+1}z = p_z\big(Z, \partial_1Z, \ldots , \partial_1^sZ, 1/\det Z\big).
\end{equation}

We now claim that the differential ideal $J = [I_{s+1}]_\Delta$, the $\Delta$-ideal generated by $I_{s+1}$,  is a maximal $\Delta'$-ideal in $k\big\{Y, \frac{1}{Y}\big\}_\Delta$. Note that $Z$ is a zero of $J$, so $J$ is a proper ideal.  Since $I_{s+1}$ is a $\{\partial\}$-ideal, one can show that $J$ is also a $\{\partial\}$-ideal. Differentiating \eqref{eqn:lin} sufficiently many times and eliminating higher derivatives, one sees that, for each $t > s$ and each entry $z$ of $Z$, $J$ contains a polynomial of the form 
$$\partial_1^t y - p_{z,i}\big(Y, \partial_1Y, \ldots , \partial_1^sY, 1/\det Y\big),$$
where $$p_{z,i}\big(Y, \partial_1Y, \ldots , \partial_1^sY, 1/\det Y\big) \in k\big[Y, \partial_1Y, \ldots ,\partial_1^sY, 1/\det Y\big].$$ Therefore, for any differential polynomial $f$, there exists a polynomial $$q_f\big(Y, \partial_1Y, \ldots , \partial_1^sY, 1/\det Y\big)$$ such that $f-q_f \in J$. Let $J'$ be a differential ideal containing $J$ and let $f \in J'$.  Since $I_{s+1}$ is a maximal $\{\partial\}$-ideal, we must have $$q_f \in J'\cap k\big[Y, \partial_1Y, \ldots ,\partial_1^sY, 1/\det Y\big] = I_{s+1}.$$  Therefore, $f \in J$, and we have shown that $J$ is maximal.   Since $J \subset I$, we must have $J = I$.

One sees from the above proof that $K_s = K$, and so $\trdeg_kK$ is finite. Therefore, the type of $K$ over $k$ is $0$. We know that the differential transcendence degree of $K$ over $k$ is the same as the differential transcendence degree of $G$ over $k^\partial$ \cite[Proposition~6.26]{CharlotteMichael}.  Therefore, $\tau(G)=0$.

If  $\tau(G)=0$, then \cite[Proposition~6.26]{CharlotteMichael} also implies that the differential transcendence degree of $K$ over $k$ is $0$, and so $K$ has differential type $0$ as well.  Hence, since $K$ is finitely generated over $k$ as a differential field, it must be finitely generated over $k$ in the usual algebraic sense. Therefore, for some $s$, we must have $$K_s = K_{s+1} = \ldots  = K.$$
The proof of the proposition when $m>1$ follows in a similar fashion noting that, for any $\theta \in \Psi(s)$, $\partial_i \in \Delta$ and any entry $z \in Z$, there exists a polynomial $$p_{z,\theta,i} \in k\big[\big\{\theta' Y: \theta' \in \Psi(s)\big\}, 1/\det Y\big]$$ such that $$\partial_i(\theta(z))  = p_{z,\theta,i}\big(\big\{\theta' Z: \theta' \in \Psi(s)\big\}, 1/\det Z\big).\qedhere$$ \end{proof}

\subsection{Algorithmic Considerations}\label{sec:algorithm}
Let $\U$ be a $\Delta = \{\partial_1, \ldots, \partial_m\}$-differentially closed field and let $k = \U(x)$ be a $\Delta'$-field where $$\Delta' = \{\partial\}\cup \Delta,\quad \partial(x) = 1, \ \partial(u) = 0 \ \ \forall u \in \U,\quad \text{and}\quad\partial_i(x) = 0,\ i= 1, \ldots, m.$$ We shall further assume that $\U$ is a computable differential field in the sense that one can effectively perform the arithmetic operations and apply the $\partial_i$. We shall show that there are algorithms to solve the following problems:
\begin{itemize} 
\item Given a linear differential equation $\partial Y = AY$ with $A \in \Mn_n(k)$ and whose PPV-group $G \subset\GL_n(\U)$ has differential type $0$, find defining equations for $G$.
\item Given a linear differential equation $\partial Y = AY$ with $A \in \Mn_n(k)$, decide if its PPV-group $G$ has the property that $G/\Ru(G)$ is constant.
\end{itemize}
Of course, if the answer to the second problem is positive, Theorem~\ref{thm:Main} implies that $\tau(G)=0$ and so one could use the first algorithm to calculate defining equations of $G$.
\subsubsection{Algorithm 1.}\label{alg1}Let $k$ and $A$ be as above and assume that the PPV-group $G$ of this equation has type $0$.  We will present an algorithm that computes the defining equations of $G$. Two of the main tools will be Propositions~\ref{prop:defeq} and \ref{prop:zeroppv}, and we will use the notation used in these propositions. Let $M$ be the parameterized differential module associated with  $\partial Y = AY$ and $K$ be the associated PPV-extension. 

The usual PV-extension of $k$ associated with the $s^{th}$ total prolongation $P^s(M)$  (now considered just as a differential module with respect to $\partial$) is $K_s$, and we denote the PV-group of $K_s$ over $k$ by $G_s$. Since $K_s \subset K_{s+1}$ and both are PV-extensions of $k$, the usual PV-theory \cite[Proposition~1.34]{Michael} implies that $G_s$ is a homomorphic image of $G_{s+1}$. The homomorphism can be made quite explicit.

 If $\partial Y = A_s Y$ is the equation associated with $P^s(M)$, one can select a basis of $P^{s+1}(M)$ so that  the associated equation $\partial Y = A_{s+1}Y$ has $A_{s+1}$ as a block triangular matrix with blocks of $A_s$ on the diagonal. The elements of $G_{s+1}$ will have a similar form to $A$ and the homomorphism $$\pi_{s+1}:G_{s+1} \rightarrow G_s$$ is given by mapping an element $g$ of $G_{s+1}$ onto the block appearing in the upper left-hand corner of $g$. Again, from the PV-theory \cite[Proposition~1.34]{Michael}, we know that $K_s = K_{s+1}$ if and only if the surjective projection $\pi_{s+1}$ is injective. 

The algorithm proceeds as follows. For each $s = 0, 1, \dots$, we  use the algorithm of \cite{Hrushovski} to successively calculate the defining equations of $G_s$ and $G_{s+1}$. We then use elimination theory to decide if the projection $\pi_{s+1}$ is injective. Since we are assuming $\tau(G) = 0$, Proposition~\ref{prop:zeroppv} implies that, for some $s$, $K_s = K_{s+1}$, and so, for this $s$, the homomorphism $\pi_{s+1}$ will be injective. Propositions~\ref{prop:defeq} and \ref{prop:zeroppv} tell us that generators of the  defining ideal of $G_{s+1}$ give us the defining equations for $G$.  

We note that one can apply  the above method without knowing in advance if the PPV-group has differential type $0$.  If the method does terminate, then we will know that the PPV-group has differential type $0$, and we will have the PPV-group. Examples are given below.

This algorithm may also be approached  via  the Tannakian theory. Let $M$ be a parameterized differential module over $k$ and let $\langle M\rangle_{\otimes}$ denote the smallest rigid abelian tensor category containing $M$. That is,  $\langle M\rangle_{\otimes}$ is obtained from $M$ by successively applying the operations of linear algebra: $\otimes$, $\oplus$, duals, and subquotients. Let $G$ be the PPV-group of $M$ and assume that $\tau(G) = 0$. This implies that
the coordinate ring of $G$ is a finitely generated $\U$-algebra \cite[Theorem~II.13.7]{Kol}. By \cite[Proposition~2.20]{Deligne}, the tensor category $\Rep_G$ admits one generator $N$. Since $$\Rep_G = \big\langle P_1^{i_1}\cdot\ldots\cdot P_m^{i_m}(M)\:|\: i_1,\ldots,i_m\Ge 0\big\rangle_{\otimes}=\bigcup_{(i_1,\ldots,i_m)} \big\langle P_1^{i_1}\cdot\ldots\cdot P_m^{i_m}(M)\big\rangle_{\otimes},$$
there exist  $p_1, \ldots, p_m \Ge 0$ such that
$$N\in \left\langle P_1^{p_1}\cdot\ldots\cdot P_m^{p_m}(M)\right\rangle_{\otimes}.$$
Therefore, for all $(i_1,\ldots,i_m)$ with $i_j > p_j$, $1\Le j\Le m$,  $$P_1^{i_1}\cdot\ldots\cdot P_m^{i_m}(M)\in \left\langle P_1^{p_1}\cdot\ldots\cdot P_m^{p_m}(M)\right\rangle_{\otimes}.$$
To achieve this containment, 
order  $m$-tuples of the form $(i_1,\ldots,i_m)$ degree-lexicographically. We then, following this enumeration of the $m$-tuples by natural numbers, let
 $$M_i := P_1^{i_1+1}\cdot\ldots\cdot P_m^{i_m+1}(M)\quad\text{and}\quad N_i := P_1^{i_1}\cdot\ldots\cdot P_m^{i_m}(M).$$ 
 At each iteration, we verify whether
 \begin{equation}\label{eq:algstep}
M_i \in {\langle N_i\rangle}_\otimes
\end{equation}
by checking whether the PV Galois group $H_i$ of $N_i$ coincides with the projection of the PV Galois group $H_i'$ of $M_i$ ($N_i$ is a submodule of $M_i$), which can be done using~\cite{Hrushovski}. Once we have found $i$ such that~\eqref{eq:algstep} holds, we output $H_i$ (keeping in mind what is the derivative of what
among the indeterminates in the coordinate ring of $H_i'$, out of which one can extract $G$).

\subsubsection{Algorithm 2.}\label{alg2}  Once again, let $k$ and $A$ be as above. We present an algorithm to determine if the PPV-group $G$ of $\partial Y = AY$ has the property that $G/\Ru(G)$ is constant.  The first step is to factor $\partial Y = AY$, that is, to find a $W \in \GL_n(k)$ such that the matrix $$B = \partial(W) W^{-1} + WAW^{-1}$$ is in block-upper triangular form 
$$ B = \begin{pmatrix} B_1 & * & * & *& *\\0 & B_2 & * & * & *\\ \vdots& \vdots& \vdots& \vdots & \vdots\\ 0 & 0& \ldots &0 &B_t\end{pmatrix}$$
 and, if $B_i$ is one of the blocks on the diagonal, then the equation $\partial Y = B_i Y$ is irreducible (cannot be factored further). Algorithms to perform such a factorization can be found in \cite{grigoriev90,grigoriev90b}; see \cite[Ch.~4.2]{Michael} for other references. The new equation $\partial Y = BY$ has the same PPV-group $G$ as $\partial Y = AY$. Furthermore, the PPV-group of $\partial Y = B_{\rm diag} Y$, where 
 $$ B_{\rm diag} = \begin{pmatrix} B_1 & 0 & \ldots  & \ldots & 0\\0 & B_2 & 0& \ldots & 0\\ \vdots& \vdots& \vdots& \vdots & \vdots\\ 0 & 0& \ldots &0 &B_t\end{pmatrix},$$
 is precisely $G/\Ru(G)$ (see \cite[Proposition~4.2]{CompMichael}). To verify that $G/\Ru(G)$ is constant,
it is enough to verify that $B_{\diag}$ is completely integrable \cite[Proposition~3.9]{PhyllisMichael}. It follows from \cite[Theorem~6.3, Rmk.~6.4]{GO} that, to do this, it is enough to show that, for all $i$, $1\Le i\Le m$,
there exists $C_i\in \Mn_n(\U(x))$ such that
\begin{equation}\label{eq:intcond}
\partial_i(B_{\diag}) - \partial(C_i) = \big[C_i,B_{\diag}\big].
\end{equation}  This is a problem of finding rational solutions of linear differential equations, and there are algorithms to do this (see \cite[Ch. 4.1]{Michael}; there are also implementations in {\sc Maple}).  Such matrices $C_i$ exist if and only if   $G/\Ru(G)$ is constant. For examples illustrating the process of deciding if the $C_i$ exist, see \cite{GO}. 

We note that, combining Algorithms~1 and~2, we see that there is an algorithm to decide, for any given $$\partial Y = AY,\ A \in \Mn_n(\U(x)),$$ with PPV-group $G$, if $G/\Ru(G)$ is constant and, if so, calculate $G$ (since, in this case, $\tau(G) = 0$).

\subsection{Examples}
We will now illustrate how Algorithm 1 works on concrete examples. In all of these examples, we will take $\U$ to be the differential closure of the field $\Q(t)$ with $\Delta = \{\partial_t\}$ and will use the notation $\partial_x$ for $\partial$ and $\partial_t$ for $\partial_1$ when discussing the field $\U(x)$. Although Algorithm 1, as stated, relies on Hrushovski's algorithm, we will use {\it ad hoc} methods  to calculate PV-groups.
\begin{example}Consider the equation (see also \cite[Example~3.1]{PhyllisMichael})
\begin{equation}\label{eq:txy}
\partial_x(y)=\frac{t}{x}y,
\end{equation}
over the field $\U(x)$, and let $M$ be the corresponding differential module. We start by applying Algorithm 2, that is, we wish to determine if there exists an element $c \in \U(x)$ such that $$\partial_xc = \partial_t (t/x) =1/x.$$ Since the residues at any pole of $\partial_xc$ would be zero, such a $c$ cannot exist. Therefore, if $G$ is the PPV-group of this equation, we have that $G/\Ru(G)$ is not constant.  Nonetheless, we will apply the method of Algorithm 1 and see that it halts.

 The PV-group of this equation is a subgroup of $\Gm(\U)$.  Since, for all $n \ne 0$, $$\partial_x(y)=\frac{nt}{x}y$$ has no solutions in $\U(x)$, the PV-group of $M$ is $G_0 = \Gm$ \cite[Example~1.19]{Michael}. The first prolongation $P^1(M)$ is given by the matrix
$$
P^1(A):=\begin{pmatrix}
t/x&1/x\\
0&t/x
\end{pmatrix}.
$$
Let $G_1$ be the PV-group of this equation. One sees that $G_1$ is a subgroup of 
$$\ \left\{ \begin{pmatrix}a & b\\
0& a
\end{pmatrix}\:\Big|\: a,\,b \in \U, a\neq 0\right\} \simeq  \Gm \times \Ga$$
that  projects onto $\Ga$.  Therefore, either $G_1 = \Gm$ or $G_1 = \Gm \times \Ga$.  If  $G_1= \Gm$, then, by \cite[Corollary~1.32]{Michael}, $$\partial_xY = P^1(A)Y$$ would be equivalent to an equation $\partial_xY = BY$, where $B$ is in the Lie algebra of diagonal matrices (that is, there would exist $W \in \GL_2(\U(x))$ such that $$B = \partial_x(W) W^{-1} + W P^1(A)W^{-1}$$ is diagonal).  A calculation in {\sc Maple} shows that this is not possible. Therefore, $$G_1=\Gm \times \Ga.$$
The second prolongation $P^2(M)$ is associates with the equation $\partial_xY = P^2(A)$, where 
$$P^2(A) = \begin{pmatrix}
t/x&1/x& 1/x &0\\
0&t/x&0&1/x\\
0&0&t/x&1/x\\
0&0&0&t/x
\end{pmatrix}.$$
Observe that $P^2(A)$ is contained in the commutative Lie algebra
$$
\left\{\left.\begin{pmatrix}u & v&v&0\\
0& u&0&v\\
0&0&u&v\\
0&0&0&u
\end{pmatrix}\:\right|\: u,\,v \in \U\right\}
$$
and that this is the Lie algebra of the algebraic group 
$$
H = \left\{\left.\begin{pmatrix}a & b&b&c\\
0& a&0&b\\
0&0&a&b\\
0&0&0&a
\end{pmatrix}\:\right|\: a,\,b, c \in \U, a\neq 0, ac-b^2=0\right\} \simeq \Gm \times \Ga.
$$
Therefore, \cite[Proposition~1.31]{Michael} implies that the PV-group of $\partial_xY = P^2(A)$ is a subgroup of $H$ that projects onto $G_1$. One sees that this implies that $G_2 = H$ and the projection $\pi_2:G_2\rightarrow G_1$ is injective. Propositions~\ref{prop:defeq} and~\ref{prop:zeroppv} imply that the PPV-group $G$ of \eqref{eq:txy} is
$$ G = \left\{g \in \Gm(\U) \ \left| \ \begin{pmatrix}g & \partial_tg&\partial_tg& \partial_t^2g\\\
0&g&0&\partial_tg\\
0&0&g&\partial_tg\\
0&0&0&g
\end{pmatrix}\right. \in H \right\}.
$$
Examining the defining equations of $H$ (substituting $g$ for $a$, $\partial_tg$ for $b$, and $\partial_t^2g$ for $c$), we see that 
$$
G = \left\{g\in \Gm \ \big| \ g\big(\partial_t^2g\big) - (\partial_t g)^2 = 0\right\}.
$$
Note that one can also obtain the above equation by calculating the matrix of $$P^1(M)\otimes_k P^1(M)\otimes_k M^\vee.$$ 
\end{example}

\begin{example}
Consider the equation $\partial_xY = AY$ where
$$A = \begin{pmatrix}
1& \frac{t}{x}+\frac{1}{x+1}\\
0&1
\end{pmatrix}.
$$
Let $M$ be the corresponding differential module. We again begin by applying Algorithm~2. The equation is already factored, and its diagonal is constant so can be easily seen to satisfy the conditions of this algorithm.  Therefore, if $G$ is the PPV-group, we have that $G/\Ru(G)$ is constant and we have determined that the PPV-group has differential type $0$. We now know that Algorithm 2 will halt with the correct answer. 

The PV-extension of $k=\U(x)$ corresponding to this equation is $$K_0 = k\left(e^x, t\log(x)+\log(x+1)\right).$$  From \cite[Example~1.19]{Michael} and \cite[Example~1.18]{Michael}, one sees that the PV-groups of $ k(e^x)$ and $k( t\log(x)+\log(x+1))$ are $\Gm(\U)$ and $\Ga(\U)$, respectively. Since these are quotients of the PV-group of  $\partial_xY = AY$, one sees that the PV-group of this equation is 
$$
G_0 = 
\ \left\{\begin{pmatrix}a & b\\
0& a
\end{pmatrix}\:\Big|\: a,\,b \in \U, a\neq 0\right\} \simeq  \Gm \times \Ga.
$$
The first prolongation $P^1(M) $ of $M$ is associated with the equation $\partial_xY = P^1(A)$, where 
$$P^1(A):= \begin{pmatrix}
1& \frac{t}{x}+\frac{1}{x+1}&0&\frac{1}{x}\\
0&1&0&0\\
0&0&1& \frac{t}{x}+\frac{1}{x+1}\\
0&0&0&1
\end{pmatrix}.
$$
This matrix lies in the Lie algebra of the algebraic group
$$
H_1 = \left\{\left.\begin{pmatrix}a & b&0&c\\
0& a&0&0\\
0&0&a&b\\
0&0&0&a
\end{pmatrix}\:\right|\: a,\,b,\,c \in \U,\, a\ne 0\right\} \simeq \Gm\times\Ga\times\Ga.
$$
This again implies that the associated PV-group is a subgroup of this group. The associate PV-extension is $$K_1 = k(e^x, t\log(x)+\log(x+1), \log(x)).$$ We claim that $\trdeg(K_1/k) = 3$. If not,  the Kolchin-Ostrowski theorem \cite[\S2]{Kolchin1968} implies that: 
 \begin{enumerate}
 \item Either there exists $0\ne n \in \ZZ$ such that ${(e^x)}^n \in \U(x)$. This would imply that the differential equation $\partial_x(y)=ny$ has a solution in $\U(x)$, which is impossible;
 \item Or there exist $c_1,\,c_2 \in \U$ such that $$c_1 (t\log x + \log(x+1)) + c_2\log x \in \U(x).$$ This would imply that there exists $f \in \U(x)$ such that $$\frac{c_1t+c_2}{x}+\frac{c_1}{(x+1)} = \partial_xf,$$
 which is impossible as well. 
 \end{enumerate}
 Since $H_1$ is connected, the dimension of $H_1$ is $3$, and the dimension of $G_1$ is equal to $\trdeg(K_1/k) = 3$, we must have $G_1 = H_1$.  In particular, the projection of $G_1$ onto $G_0$ cannot be injective.
 
 The second prolongation $P^2(M) $  of $M$ is associated with the matrix 
 $$
 P^2(A) =
\begin{pmatrix}
1& \frac{t}{x}+\frac{1}{x+1}&0&\frac{1}{x}&0&\frac{1}{x}&0&0\\
0&1&0&0&0&0&0&0\\
0&0&1& \frac{t}{x}+\frac{1}{x+1}&0&0&0&\frac{1}{x}\\
0&0&0&1&0&0&0&0\\
0&0&0&0&1& \frac{t}{x}+\frac{1}{x+1}&0&\frac{1}{x}\\
0&0&0&0&0&1&0&0\\
0&0&0&0&0&0&1& \frac{t}{x}+\frac{1}{x+1}\\
0&0&0&0&0&0&0&1
\end{pmatrix},
$$
Note that $P^2(A)$ belongs to the Lie algebra of the algebraic group
$$
H_2 = \left\{\left.\begin{pmatrix}
a& b&0&c&0&c&0&0\\
0&a&0&0&0&0&0&0\\
0&0&a& b&0&0&0&c\\
0&0&0&a&0&0&0&0\\
0&0&0&0&a& b&0&c\\
0&0&0&0&0&a&0&0\\
0&0&0&0&0&0&a& b\\
0&0&0&0&0&0&0&a
\end{pmatrix}\ \right| \ a,\,b,\,c \in \U,\, a\ne 0\right\} \simeq \Gm\times\Ga\times\Ga.
$$
Therefore, the PV-group $G_2$ associated with $P^2(M)$ is a subgroup of $H_2$ that projects surjectively onto $G_1$. The only possibility is that $G_2 = H_2$. Note that this projection is injective so $K_1 = K_2$. Propositions~\ref{prop:defeq} and \ref{prop:zeroppv} imply that the PPV-group $G$ of \eqref{eq:txy} is
$$
G= \left\{\begin{pmatrix} e & f\\0&e \end{pmatrix} \in \Gm(\U)\times \Ga(\U)\ \left| \ \begin{pmatrix}
e& f&\partial_t e&\partial_t f&\partial_t e&\partial_t f&d_1^2 e&\partial_t^2 f\\
0&e&0&\partial_te&0&\partial_te&0&\partial_t^2e\\
0&0&e& f&0&0&\partial_t e&\partial_t f\\
0&0&0&e&0&0&0&\partial_te\\
0&0&0&0&e& f&\partial_t e&\partial_t f\\
0&0&0&0&0&e&0&\partial_te\\
0&0&0&0&0&0&e& f\\
0&0&0&0&0&0&0&e
\end{pmatrix}\right.\in H_2\right\}.
$$
Examining the defining equations of $G_2$, we see that 
$$G= \left\{\left.\begin{pmatrix} e & f\\0&e \end{pmatrix} \in \Gm(\U)\times \Ga(\U)\ \right| \ \partial_te= 0, \  \partial_t^2f = 0 \right\}.
$$
 \end{example}
 
 \begin{example}[Picard--Fuchs equation] As in \cite[Example~6.9]{GO}, consider $K=\U(x,z)$, where $z^2=x(x-1)(x-t)$. Following \cite[Example~2]{Carlos}, let 
$$p = -\frac{1}{2}\left(\frac{1}{x}+\frac{1}{x-1}+\frac{1}{x-t}\right).$$  We will consider the parameterized linear differential equation
\begin{equation}\label{eq:PF}
\partial^2(y)-p\partial(y)=0
\end{equation}
over $k= \U(x)$ and  outline how Algorithm 1 can be used to calculate the PPV-group of this equation. A calculation in {\sc Maple} using
the procedure {\tt kovacicsols} of the {\tt DEtools} package shows that the PV-extension corresponding to this equation is $$K_0 = k\left(z, \int\frac{1}{z} dx\right).$$ Since $\partial_xy = \frac{1}{z}$ has no solution in $k(z)$, the Kolchin-Ostrowski Theorem implies that the PV-group of $K_0$ over $k(z)$ is $\Ga(\U)$.  A simple calculation shows that the PV-group of $K_0$ over $k$ is $$\left\{\left.\begin{pmatrix}1&b\\0&a\end{pmatrix}\ \right|\ a^2=1,\ b \in \U\right\} \cong \ZZ/2\ZZ\ltimes\Ga$$
for the fundamental solution matrix of~\eqref{eq:PF} chosen as
$$
\begin{pmatrix}
1&\int\frac{1}{z} dx\\
0&\frac{1}{z}
\end{pmatrix}.
$$
The PV-extension of the first prolongation is 
$$K_1 = k\left(z, \int\frac{1}{z} dx, \int \partial_t\left(\frac{1}{z}\right) dx\right).$$
A calculation shows that, if $c_1,c_2 \in \U$ and $$c_1 \frac{1}{z} + c_2 \partial_t\left(\frac{1}{z}\right) = \partial_x f$$ for some $f \in \U(x,z)$, then $c_1 = c_2 = 0$. Therefore, $\int\frac{1}{z} dx$ and $\int \partial_t(\frac{1}{z}) dx$ are algebraically independent over $\U(x,z)$.  This implies that the PV-group of $K_1$ over $k$ is $$ \ZZ/2\ZZ\ltimes\Ga \times \Ga.$$ The PV-extension of the second prolongation is
$$K_2 = k\left(z, \int\frac{1}{z} dx, \int \partial_t\left(\frac{1}{z}\right) dx, \int \partial_t^2\left(\frac{1}{z}\right) dx\right).$$
This implies that the PV-group of $K_2$ over $k$ is a subgroup of $$ \ZZ/2\ZZ\ltimes\Ga \times \Ga\times\Ga.$$
As in \cite[\S12]{manin58}, one can show that 
$$2t(t-1)\partial_t^2\left(\frac{1}{z}\right)+(4t-2)\partial_t\left(\frac{1}{z}\right)+\frac{1}{2}\frac{1}{z} = \partial_x(f)$$
for some $f \in \U(x,z)$. In particular, $K_2 = K_1$, and the PV-group of $K_2$ over $k$ can be identified with 
$$ \left\{(a,b,c,d) \in \ZZ/2\ZZ\ltimes\Ga \times \Ga\times\Ga \ \Big| \ a^2 = 1 \text{ and } \ 2t(t-1)d+(4t-2)c+\frac{1}{2}b = 0\right\}.$$
The projection of this group onto the PV-group of $K_1$ is injective. So, the PPV-group of \eqref{eq:PF} is 
$$\left\{\left.\begin{pmatrix}1&b\\0&a\end{pmatrix}\ \right|\ a^2=1\ \text{ and }\ 2t(t-1)d_t^2b+(4t-2)\partial_tb+\frac{1}{2}b = 0\right\} \subset \ZZ/2\ZZ\ltimes\Ga.$$
\end{example}

\subsection*{Acknowledgements} We are grateful to the referee for the very helpful suggestions.

\bibliographystyle{spmpsci}
\small
\bibliography{unipotent}

\end{document}